\documentclass[11pt]{amsart}

\usepackage[T1]{fontenc}
\usepackage[utf8]{inputenc}
\usepackage{lmodern}
\usepackage{amsmath,amssymb,amsthm}
\usepackage{hyperref}

\hypersetup{
  colorlinks=true,
  linkcolor=blue,
  citecolor=blue,
  urlcolor=blue,
  pdftitle={Eta-products, Eichler integrals, and the level-8 Apéry limit},
  pdfauthor={Alex Shvets}
}

\numberwithin{equation}{section}

\theoremstyle{plain}
\newtheorem{theorem}{Theorem}[section]
\newtheorem{proposition}[theorem]{Proposition}
\newtheorem{lemma}[theorem]{Lemma}
\newtheorem{corollary}[theorem]{Corollary}
\theoremstyle{definition}

\theoremstyle{remark}

\newcommand{\Q}{\mathbb{Q}}
\newcommand{\Z}{\mathbb{Z}}
\newcommand{\C}{\mathbb{C}}
\newcommand{\Hh}{\mathfrak{H}}
\newcommand{\dd}{\mathrm{d}}
\newcommand{\ii}{\mathrm{i}}
\newcommand{\e}{\mathrm{e}}
\newcommand{\etaf}{\eta}
\newcommand{\Efour}{E_4}
\newcommand{\sigthree}{\sigma_3}
\newcommand{\Dop}{D}
\newcommand{\thet}{\theta}
\DeclareMathOperator{\PCF}{PCF}
\DeclareMathOperator{\Res}{Res}
\DeclareMathOperator{\ord}{ord}

\title[Eta-products and the level-8 Apéry limit]{Eta-products, Eichler integrals, and the level-8 Apéry limit}
\author[A.~Shvets]{Alex Shvets}
\address{Haifa, Israel}
\email{alex@shvets.io}
\urladdr{\href{https://orcid.org/0009-0005-9802-379X}{ORCID 0009-0005-9802-379X}}

\begin{document}

\begin{abstract}
We give an independent eta-product derivation of the level-$8$ Apéry limit
\[
\lim_{n\to\infty}\frac{B_n^{(8)}}{s_n}=\frac{7}{32}\zeta(3),
\qquad
s_n=\sum_{k=0}^n \binom{n}{k}^2\binom{2k}{n}^2,
\]
where $B_n^{(8)}$ is the rational companion sequence satisfying the same
cubic recurrence with initial values $B_0^{(8)}=0$, $B_1^{(8)}=1$. This
value was identified numerically by Almkvist--van Straten--Zudilin and was
proved by Golyshev via Beukers's Atkin--Lehner modular method; it was later
recomputed by Golyshev--Kerr--Sasaki in the motivic/normal-function
framework. The continued fraction
\[
\PCF\bigl((2n+1)(3n^2+3n+1),-n^6\bigr)=\frac{8}{7\zeta(3)}
\]
already appears in Batut--Olivier and was later rediscovered by the
Ramanujan Machine as conjecture Z1. The contribution of the present paper
is an explicit rederivation, in the eta-product normalization, of the
already-known level-$8$ Apéry limit. We spell out the eta-product verification of the
Wronskian identity, the normalization of the Eichler integral, the residue
computation of the Fricke period polynomial, and the elementary continuant
conversion.
\end{abstract}

\maketitle

\section{Introduction}

Let
\begin{equation}\label{eq:intro-s-def}
 s_n:=\sum_{k=0}^n \binom{n}{k}^2\binom{2k}{n}^2
 \qquad(n\ge 0)
\end{equation}
be the level-$8$ Apéry-like sequence, and let $B_n^{(8)}\in\Q$ be defined by
\begin{equation}\label{eq:intro-B-rec}
(n+1)^3u_{n+1}=(2n+1)(12n^2+12n+4)u_n-16n^3u_{n-1}
\qquad(n\ge 1),
\end{equation}
with initial values
\begin{equation}\label{eq:intro-B-init}
B_0^{(8)}=0,
\qquad
B_1^{(8)}=1.
\end{equation}
The associated polynomial continued fraction is
\begin{equation}\label{eq:intro-Z1}
\PCF\bigl((2n+1)(3n^2+3n+1),-n^6\bigr)=\frac{8}{7\zeta(3)},
\end{equation}
where
\[
\PCF(a_n,b_n):=a_0+\cfrac{b_1}{a_1+\cfrac{b_2}{a_2+\cfrac{b_3}{a_3+\ddots}}}.
\]
This is the normalization later rediscovered by the Ramanujan Machine and
listed there as conjecture Z1; see \cite{RMK21}. The companion level-$6$
case, involving the Domb sequence and an analogous level-$6$ eta-product
argument, is treated in \cite{ShvetsZ2}.

The level-$8$ value has substantial prior history. Almkvist, van Straten,
and Zudilin numerically identified the corresponding Apéry limit as
$\frac{7}{32}\zeta(3)$ in their table of Apéry limits, using PSLQ-based
recognition of constants and giving the eta-product parametrization of the
level-$8$ case \cite{ASZ08}. Golyshev then placed the same recurrence in the
Mukai threefold setting. In \cite[Theorem~2.4]{Gol09}, the quantum
recurrences of the Mukai threefolds $V_{10},V_{12},V_{14},V_{16},V_{18}$ are
identified as recurrences of Apéry type, with Apéry constant
$\frac{7}{32}\zeta(3)$ for $V_{16}$. For this case the Picard--Fuchs operator
is written as
\[
D^3-4t(1+2D)(3D^2+3D+1)+16t^2(D+1)^3,
\]
which is the operator used below for the sequence \eqref{eq:intro-s-def}.

Golyshev's rational-case argument in \cite[Section~3]{Gol09} gives the
Atkin--Lehner odd weight-$2$ form
\[
\Phi=4E_{2,1}-2E_{2,2}+2E_{2,4}-4E_{2,8}
\]
and the weight-$4$ form
\[
F=E_4-\frac{21}{4}E_{4,2}+\frac{21}{4}E_{4,4}-E_{4,8}.
\]
In the notation of \cite{Gol09}, the associated $L$-function is
\[
L(F,s)=\left(1-\frac{21}{4}\,2^{2-s}+\frac{21}{4}\,4^{2-s}-8^{2-s}\right)
\zeta(s)\zeta(s-3),
\]
and \cite[Corollary~3.4]{Gol09}, using Beukers's Atkin--Lehner modular
method, gives
\[
\lim_{n\to\infty}\frac{b_n}{a_n}=L(F,3)=\frac{7}{32}\zeta(3)
\]
for the $V_{16}$ recurrence.

A second proof route was later given by Golyshev, Kerr, and Sasaki in the
motivic/normal-function framework \cite{GKS24}. In \cite[Section~5.5]{GKS24},
after formula (5.7), the case $V_{16}$ yields the explicit computation
\[
V(0)=4\int_0^1\frac{\log^2(u)}{1-u^2}\,\dd u
=4\bigl(\operatorname{Li}_3(1)-\operatorname{Li}_3(-1)\bigr)
=7\zeta(3),
\qquad
\widehat V(0)=\frac{7}{32}\zeta(3).
\]
Thus the numerical value of the level-$8$ Apéry limit is not a new theorem
of the present paper.

The continued fraction also has prior art. Batut and Olivier wrote in
\cite[Section~3.2.4]{BO80} the formula
\[
\frac{7}{8}\zeta(3)=
\cfrac{1}{p(0)-\cfrac{1^6}{p(1)-\cfrac{2^6}{p(2)-\ddots}}},
\qquad
p(n)=6n^3+9n^2+5n+1.
\]
Since
\[
p(n)=6n^3+9n^2+5n+1=(2n+1)(3n^2+3n+1),
\]
taking reciprocals gives exactly \eqref{eq:intro-Z1} under the convention
for $\PCF$ displayed above. The Ramanujan Machine later rediscovered this
continued fraction and listed it as conjecture Z1 \cite{RMK21}.

The goal of the present paper is therefore not to establish priority for
\eqref{eq:intro-Z1} or for the value $\frac{7}{32}\zeta(3)$. Rather, we give
an independent explicit derivation in the eta-product normalization, with
the Fricke period polynomial computed by Mellin--Barnes residues, and we
spell out the continuant bridge to the Batut--Olivier/Ramanujan-Machine
form. This gives a purely computational route parallel to Golyshev's
Beukers--Atkin--Lehner proof and to the motivic proof of
Golyshev--Kerr--Sasaki.

Let
\[
\mathcal A(z):=\sum_{n\ge 0}s_n z^n,
\qquad
\mathcal B(z):=\sum_{n\ge 0}B_n^{(8)} z^n.
\]
The main result is formulated as follows.

\begin{theorem}[Explicit eta-product derivation of the level-$8$ Apéry limit]\label{thm:main-limit}
The derivation in Sections~\ref{sec:param}--\ref{sec:singularity} gives
\begin{equation}\label{eq:main-limit}
\lim_{n\to\infty}\frac{B_n^{(8)}}{s_n}=\frac{7}{32}\zeta(3).
\end{equation}
This value was identified numerically by Almkvist--van Straten--Zudilin
\cite{ASZ08} and proved by Golyshev \cite[Theorem~2.4,
Corollary~3.4]{Gol09} via Beukers's Atkin--Lehner modular method; an
alternative proof in the motivic/normal-function framework was given by
Golyshev--Kerr--Sasaki \cite[Section~5.5]{GKS24}. The derivation here provides an explicit computational route
parallel to the Beukers--Golyshev and Golyshev--Kerr--Sasaki approaches,
through the explicit eta-product parametrization, a Wronskian identity, and
Mellin--Barnes extraction of the Fricke period polynomial.
\end{theorem}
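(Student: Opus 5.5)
The plan is Beukers' modular method carried out in the explicit level-$8$ eta-product normalization. The limit will be read off from the analytic behaviour of $\mathcal A$ and $\mathcal B$ at the singularity of the differential operator nearest the origin.

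The first step is the modular parametrization. Following \cite{ASZ08}, one takes a Hauptmodul $t(\tau)$ for $\Gamma_0(8)^{+}$ (or the relevant genus-zero group) written as an eta-quotient in $\etaf(\tau),\etaf(2\tau),\etaf(4\tau),\etaf(8\tau)$. One also takes a weight-$2$ eta-product $f(\tau)$ and checks that $\mathcal A(t(\tau))=f(\tau)$ near $\tau=\ii\infty$. This is checked by verifying that $f$ satisfies the Picard--Fuchs equation
\[
\bigl(\thet^3-4t(1+2\thet)(3\thet^2+3\thet+1)+16t^2(\thet+1)^3\bigr)y=0
\]
in the variable $t$. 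The recurrence \eqref{eq:intro-B-rec} for $B_n^{(8)}$ means that $\mathcal B$ satisfies the same operator with an inhomogeneous constant term. Writing $\mathcal B(t(\tau))=f(\tau)\,g(\tau)$, one finds that $\Dop^3 g$ (with $\Dop=q\,\dd/\dd q$) is, up to a normalizing constant, the weight-$4$ form
\[
F=\frac{(\Dop t/t)^3}{f^2}\cdot(\text{explicit factor})
=E_4-\tfrac{21}{4}E_{4,2}+\tfrac{21}{4}E_{4,4}-E_{4,8}.
\]
This identification is the Wronskian identity. I would prove it by writing both sides as holomorphic modular forms of weight $4$ on $\Gamma_0(8)$ and comparing enough $q$-coefficients, using the Sturm bound; the space has small dimension, so few coefficients are needed. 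It then follows that $g$ is the Eichler integral $\sum_{n\ge1}n^{-3}a_F(n)q^n$, fixed by the initial conditions $B_0^{(8)}=0$ and $B_1^{(8)}=1$. Getting this normalization exactly right is the second bookkeeping step.

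The second step is to locate the singularity. The radius of convergence is the smallest root $t_0$ of $1-24t+16t^2$, namely $t_0=(3-2\sqrt2)/4$, and $s_n$ grows like $t_0^{-n}$. This point corresponds to the fixed point $\tau_0=\ii/(2\sqrt2)$ of the Fricke involution $W_8$. By the Darboux/transfer argument, the limit $B_n^{(8)}/s_n$ equals $\mathcal B/\mathcal A=g$ evaluated at $\tau_0$, up to the analytically continued part. More precisely, $\mathcal B-c\,\mathcal A$ is analytic at $t_0$ exactly when $c=g(\tau_0)$ corrected by the Fricke period polynomial. So I would apply $W_8$ to the Eichler integral and get
\[
g(\tau)-\epsilon\,(\ldots)g(-1/(8\tau))=r(\tau),
\]
with $r$ a rational-function period polynomial. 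The limit is then the constant term of $r$ at the fixed point, which is the part that is not killed by symmetry.

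The third step computes $r$, and I expect it to be the main obstacle. I would take the Mellin transform $\int_0^\infty g(\ii y)y^{s-1}\,\dd y$, which is built from $L(F,s+3)$. Since $L(F,s)$ has the factorization given above, namely $(1-\tfrac{21}{4}2^{2-s}+\tfrac{21}{4}4^{2-s}-8^{2-s})\zeta(s)\zeta(s-3)$, the Mellin--Barnes inversion shifted across $\Re s=0$ picks up residues from the poles of $\zeta(s+3)$, of $\zeta(s)$, and of the Gamma factor. Each residue is explicit in $\zeta(3)$ and Bernoulli numbers, and the cusp contributions must cancel. The delicate points are the signs and the factors of $\sqrt8$ coming from $W_8$, together with checking that the $\zeta(3)$ coefficient is $L(F,3)$. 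With the Euler factor at $s=3$ this gives
\[
\Bigl(1-\tfrac{21}{8}+\tfrac{21}{16}-\tfrac18\Bigr)\zeta(3)\zeta(0)
=\Bigl(-\tfrac{7}{16}\Bigr)\Bigl(-\tfrac12\Bigr)\zeta(3)
=\tfrac{7}{32}\zeta(3).
\]
This matches \eqref{eq:main-limit}, so the residue computation should reduce the limit to this one line. I would double-check the normalization numerically by computing $B_n^{(8)}/s_n$ for moderate $n$.
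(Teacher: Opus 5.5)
Your overall plan matches the paper's: eta-product parametrization, Eichler integral of the level-$8$ Eisenstein combination via the Sturm bound, Fricke period polynomial by Mellin--Barnes, and singularity analysis at the Fricke fixed point. The gap is the step that turns the period relation into the limit, which is missing and, as written, misstated. The limit is not ``$g(\tau_0)$ corrected'', and nothing can be read off by evaluating the period relation at $\tau_0$. With the correct sign, $(g|_{-2}W_8)(\tau_0)+g(\tau_0)=(8\tau_0^2+1)g(\tau_0)=0$ and also $r(\tau_0)=0$, so that evaluation reads $0=0$.

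What the transfer theorem gives is $\lim B_n^{(8)}/s_n=\beta_1/\alpha_1$, the ratio of the coefficients of $(1-z/t_0)^{1/2}$ in $\mathcal B$ and $\mathcal A$. Computing it needs three facts your sketch never uses:
(i) $t-t_0$ vanishes to order exactly $2$ at $\tau_0$. Order $\ge 2$ comes from $t\circ W_8=t$, and exactly $2$ from the Wronskian identity. So $(1-t/t_0)^{1/2}$ is a local coordinate there, and $\beta_1/\alpha_1=(fg)'(\tau_0)/f'(\tau_0)$.
(ii) $f|_2W_8=-f$, which gives $f'(\tau_0)=2\ii\sqrt2\,f(\tau_0)\ne0$. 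Hence $\alpha_1\neq0$, which you also need for $t_0$ to be a genuine singularity of $\mathcal A$ and for $s_n\sim Ct_0^{-n}n^{-3/2}$. It also turns the ratio into $g(\tau_0)+g'(\tau_0)/(2\ii\sqrt2)$.
(iii) Differentiating the period relation at $\tau_0$ gives $4\ii\sqrt2\,g(\tau_0)+2g'(\tau_0)=r'(\tau_0)$, so the limit is $r'(\tau_0)/(4\ii\sqrt2)$.

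So the sign $\epsilon$ you left open is not bookkeeping. It is $g_8|_4W_8=-g_8$, i.e.\ $f|_2W_8=-f$, that makes $r$ a multiple of $8\tau^2+1$ and makes (ii)--(iii) work.

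An equivalent fix, closer to your remark about $\mathcal B-c\mathcal A$, goes as follows. $W_8$ is the local deck involution of $t$ at $\tau_0$, and $f(W_8\tau)=-8\tau^2f(\tau)$. Hence $f(g-c)=(\mathcal B-c\mathcal A)\circ t$ is $W_8$-invariant if and only if $g+g|_{-2}W_8=c(8\tau^2+1)$. In that case $\mathcal B-c\mathcal A$ is analytic at $t_0$, and $B_n^{(8)}-cs_n$ is exponentially smaller than $s_n$. This is what makes ``the limit is the constant term of $r$'' precise.

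Once that is in place, your one-line evaluation of $L(F,3)$ is indeed the answer. The symmetry $r|_{-2}W_8=r$ forces $r=c(8\tau^2+1)$, and $c$ is the residue at $s=0$, namely $L(F,3)$. For this you must move the contour all the way to the reflected line $\Re s=-\sigma-2$ with $1<\sigma<2$, past $s=-2$ and not merely across $\Re s=0$. You then identify the shifted integral with the $W_8$-transformed Eichler integral via the functional equation.

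A smaller point concerns your first step. The variation-of-constants formula is $\Dop^3g=(\Dop t/t)^3\,t/\bigl(f\,(1-24t+16t^2)\bigr)$. This is not visibly holomorphic at the zeros of $1-24t+16t^2$ in $\Hh$ (for instance at $\tau_0$), so the Sturm bound cannot be applied to it directly. The paper first proves $(\Dop t/t)^2=f^2(1-24t+16t^2)$, where both sides are manifestly holomorphic. It then deduces $\Phi=f\cdot\Dop t$, and only afterwards compares with the Eisenstein series.
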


\begin{corollary}[Continuant recovery of the Batut--Olivier/Ramanujan-Machine form]\label{cor:main-cf}
The Apéry limit \eqref{eq:main-limit} is equivalent, via a continuant
transformation, to the continued fraction
\[
\PCF\bigl((2n+1)(3n^2+3n+1),-n^6\bigr)=\frac{8}{7\zeta(3)},
\]
which appears explicitly in Batut--Olivier \cite[Section~3.2.4]{BO80} and
was later rediscovered by the Ramanujan Machine \cite{RMK21} and listed
there as conjecture Z1.
\end{corollary}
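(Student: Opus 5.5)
The plan is to rescale the level-$8$ recurrence \eqref{eq:intro-B-rec} until it becomes the three-term recurrence satisfied by the numerators and denominators of the convergents of $\PCF\bigl((2n+1)(3n^2+3n+1),-n^6\bigr)$. After that, the corollary reduces to matching initial values and then quoting Theorem~\ref{thm:main-limit}. The whole argument is elementary.

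\emph{Step 1: normalize the recurrence.} Write $p(n)=(2n+1)(3n^2+3n+1)$. Since $12n^2+12n+4=4(3n^2+3n+1)$, the recurrence \eqref{eq:intro-B-rec} reads
\[
(n+1)^3u_{n+1}=4p(n)u_n-16n^3u_{n-1}.
\]
Substitute $u_n=4^n v_n$ and divide by $4^{n+1}$. This gives $(n+1)^3v_{n+1}=p(n)v_n-n^3v_{n-1}$. Next put $q_n=(n!)^3v_n=(n!)^3u_n/4^n$. Multiplying through by $(n!)^3$ yields
\[
q_{n+1}=p(n)\,q_n-n^6\,q_{n-1}.
\]
Any solution $u_n$ of \eqref{eq:intro-B-rec} therefore gives a solution $q_n$ of this recurrence, and conversely.

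\emph{Step 2: match with the continuants.} For $\PCF(a_n,b_n)$ with $a_n=p(n)$ and $b_n=-n^6$, the convergents are $P_N/Q_N$, where both sequences satisfy $x_N=p(N)x_{N-1}-N^6x_{N-2}$. The initial data are $P_{-1}=1$, $P_0=a_0=1$, $Q_{-1}=0$, $Q_0=1$. Setting $x_N=q_{N+1}$ turns this into exactly the recurrence of Step~1.

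For $u=s$ we have $s_0=1$ and $s_1=4$ (only the $k=1$ term survives), hence $q_0=1$ and $q_1=1$. Thus $P_N=(N+1)!^3\,s_{N+1}/4^{N+1}$.

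For $u=B^{(8)}$ we have $B_0^{(8)}=0$ and $B_1^{(8)}=1$, hence $q_0=0$ and $q_1=1/4$. Thus $Q_N=4\,(N+1)!^3\,B^{(8)}_{N+1}/4^{N+1}$.

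By uniqueness of solutions of a second-order recurrence with given initial values, these identifications hold for all $N\ge -1$. Consequently
\[
\frac{P_N}{Q_N}=\frac{s_{N+1}}{4B^{(8)}_{N+1}}.
\]

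\emph{Step 3: pass to the limit.} By Theorem~\ref{thm:main-limit}, $B_n^{(8)}/s_n\to\frac{7}{32}\zeta(3)\neq 0$. Hence $P_N/Q_N\to\bigl(4\cdot\frac{7}{32}\zeta(3)\bigr)^{-1}=\frac{8}{7\zeta(3)}$. Since convergence of the $\PCF$ means by definition convergence of $P_N/Q_N$, this is the claimed identity. Each step is reversible, so the limit statement and the continued fraction identity are equivalent, which is the content of the corollary.

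Taking reciprocals, together with the factorization $p(n)=6n^3+9n^2+5n+1$ already noted in the introduction, recovers the Batut--Olivier form. There is no real obstacle here. The only delicate point is bookkeeping: the index shift $x_N=q_{N+1}$, the factor $4$ coming from $B_1^{(8)}=1$ versus $s_1=4$, and the requirement $Q_N\neq0$ for large $N$, which follows from $s_n>0$ and the nonzero limit.
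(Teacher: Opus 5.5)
Your proposal is correct and follows essentially the same route as the paper's Proposition~\ref{prop:continuants}. The rescaling $u_n\mapsto (n!)^3u_n/4^n$ with the index shift gives $P_N=(N+1)!^3s_{N+1}/4^{N+1}$ and $Q_N=(N+1)!^3B^{(8)}_{N+1}/4^{N}$, hence $P_N/Q_N=s_{N+1}/(4B^{(8)}_{N+1})$, and Theorem~\ref{thm:main-limit} then gives the limit. Beyond the paper, you derive the rescaling step by step instead of verifying it afterwards, and you check the converse direction of the stated equivalence, which the paper leaves implicit. These are bookkeeping improvements, not a different argument.
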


The proof proceeds as follows. In Section~\ref{sec:param} we recall the
level-$8$ eta-product parametrization and the Picard--Fuchs equation for
$\mathcal A$. In Section~\ref{sec:eichler} we identify
$\mathcal B/\mathcal A$ with a weight $-2$ Eichler integral of a level-$8$
weight-$4$ form. Section~\ref{sec:period} derives the Fricke transformation
law and computes the associated period polynomial explicitly:
\[
P_8(\tau)=\frac{7}{32}\zeta(3)(8\tau^2+1).
\]
Section~\ref{sec:singularity} performs the singularity analysis at the
dominant critical value and gives the eta-product derivation of
Theorem~\ref{thm:main-limit}. Finally, Section~\ref{sec:cf} converts the
Apéry limit into the continued-fraction normalization of
Corollary~\ref{cor:main-cf}.

\section{The level-\texorpdfstring{$8$}{8} modular parametrization}\label{sec:param}

The sequence \eqref{eq:intro-s-def} satisfies the cubic recurrence
\begin{equation}\label{eq:s-rec}
(n+1)^3s_{n+1}=(2n+1)(12n^2+12n+4)s_n-16n^3s_{n-1}
\qquad(n\ge 1),
\end{equation}
with $s_0=1$ and $s_1=4$; see Cooper \cite[Table~2, level $8$]{Cooper2023}. The sporadic level-$8$ sequence and its modular parametrization go back to Cooper \cite{Cooper2012}. A coefficient check shows that \eqref{eq:s-rec} and \eqref{eq:intro-B-rec} are equivalent to
\begin{equation}\label{eq:theta-ode}
\bigl[\thet^3-4z(2\thet+1)(3\thet^2+3\thet+1)+16z^2(\thet+1)^3\bigr]\mathcal A=0,
\end{equation}
\begin{equation}\label{eq:theta-inhom}
\bigl[\thet^3-4z(2\thet+1)(3\thet^2+3\thet+1)+16z^2(\thet+1)^3\bigr]\mathcal B=z,
\end{equation}
where $\thet=z\,\dd/\dd z$. In ordinary differential form the homogeneous equation is
\begin{align}
&z^2(16z^2-24z+1)y'''+3z(32z^2-36z+1)y'' \notag\\
&\qquad +(112z^2-80z+1)y'+4(4z-1)y=0.
\label{eq:ordinary-ode}
\end{align}

Let $q=\e^{2\pi\ii\tau}$ with $\tau\in\Hh$, and define the eta-quotients
\begin{equation}\label{eq:t-Y-def}
t(\tau):=\frac{\etaf(\tau)^8\etaf(8\tau)^8}{\etaf(2\tau)^8\etaf(4\tau)^8},
\qquad
Y(\tau):=\frac{\etaf(2\tau)^6\etaf(4\tau)^6}{\etaf(\tau)^4\etaf(8\tau)^4}.
\end{equation}
The level-$8$ modular parametrization of Cooper is
\begin{equation}\label{eq:parametrization}
Y(\tau)=\mathcal A\bigl(t(\tau)\bigr).
\end{equation}
The first Fourier coefficients are
\begin{equation}\label{eq:t-q}
t(\tau)=q-8q^2+28q^3-64q^4+142q^5-352q^6+O(q^7),
\end{equation}
\begin{equation}\label{eq:Y-q}
Y(\tau)=1+4q+8q^2+16q^3+24q^4+24q^5+32q^6+O(q^7).
\end{equation}
Since $\mathcal A(z)=1+4z+40z^2+544z^3+8536z^4+\cdots$, the identity \eqref{eq:parametrization} begins
\[
\sum_{n\ge 0}s_n t(\tau)^n=1+4q+8q^2+16q^3+24q^4+24q^5+32q^6+\cdots,
\]
which matches \eqref{eq:Y-q}.

The Fricke involution at level $8$ is
\begin{equation}\label{eq:W8-def}
W_8(\tau):=-\frac{1}{8\tau},
\qquad
W_8=\begin{pmatrix}0&-1\\ 8&0\end{pmatrix},
\qquad
\det W_8=8,
\qquad
W_8^2=-8I.
\end{equation}

\begin{lemma}\label{lem:t-Y-transform}
One has
\begin{equation}\label{eq:t-invariant}
t(W_8\tau)=t(\tau),
\end{equation}
\begin{equation}\label{eq:Y-fricke}
Y(W_8\tau)=-8\tau^2Y(\tau),
\qquad\text{equivalently}\qquad
Y|_2W_8=-Y.
\end{equation}
\end{lemma}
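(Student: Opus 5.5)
The proof uses only the eta transformation law $\etaf(-1/\tau)=\sqrt{-\ii\tau}\,\etaf(\tau)$, with the principal branch of the square root on $\Hh$. Under $W_8$, each factor $\etaf(d\tau)$ with $d\in\{1,2,4,8\}$ is sent to $\etaf(-d/(8\tau))=\etaf(-1/((8/d)\tau))$. This equals $\sqrt{-\ii(8/d)\tau}\,\etaf((8/d)\tau)$. So $W_8$ permutes the levels by $d\leftrightarrow 8/d$, swapping $1\leftrightarrow 8$ and $2\leftrightarrow 4$, and each factor picks up a multiplier $\sqrt{8\tau/d}\cdot\sqrt{-\ii}$. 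Both identities then reduce to bookkeeping of these multipliers.

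For \eqref{eq:t-invariant}, the numerator $\etaf(\tau)^8\etaf(8\tau)^8$ maps to
\[
\etaf(8\tau)^8\etaf(\tau)^8\,(-\ii)^{8}\,(8\tau)^4\tau^4 .
\]
The denominator $\etaf(2\tau)^8\etaf(4\tau)^8$ maps to
\[
\etaf(4\tau)^8\etaf(2\tau)^8\,(-\ii)^{8}\,(4\tau)^4(2\tau)^4 .
\]
Since $8\cdot 1=4\cdot 2$, the prefactors cancel exactly, giving $t(W_8\tau)=t(\tau)$. The exponents are even, so all square roots appear to even powers and no branch ambiguity arises.

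For \eqref{eq:Y-fricke}, the numerator $\etaf(2\tau)^6\etaf(4\tau)^6$ acquires the factor $(-\ii)^{6}(4\tau)^3(2\tau)^3=-512\tau^6$. The denominator $\etaf(\tau)^4\etaf(8\tau)^4$ acquires $(-\ii)^4(8\tau)^2\tau^2=64\tau^4$. The quotient is $-8\tau^2$, so $Y(W_8\tau)=-8\tau^2Y(\tau)$.

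It remains to translate this into slash notation. With the convention $(f|_kg)(\tau)=(\det g)^{k/2}(c\tau+d)^{-k}f(g\tau)$, we have $c\tau+d=8\tau$ and $\det W_8=8$. Hence
\[
(Y|_2W_8)(\tau)=8\cdot(8\tau)^{-2}\cdot(-8\tau^2)Y(\tau)=-Y(\tau),
\]
which is the stated equivalent form.

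The main point needing care is the branch of $\sqrt{-\ii\tau}$ once $\tau$ is rescaled. Since $8\tau/d\in\Hh$ whenever $\tau\in\Hh$, the formula applies with argument $(8/d)\tau$. The square roots only ever enter to even powers (fourth, sixth and eighth powers of $\etaf$), so each multiplier is an honest integer power of $-\ii(8/d)\tau$ and signs are unambiguous. The sign $-1$ in \eqref{eq:Y-fricke} comes precisely from $(-\ii)^6=-1$.

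As a sanity check one can compare $q$-expansions numerically at the fixed point $\tau=\ii/\sqrt8$. There $W_8\tau=\tau$, so \eqref{eq:Y-fricke} reads $Y=-8\tau^2 Y=Y$, which is consistent.
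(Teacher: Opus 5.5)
Your proposal is correct and follows essentially the same route as the paper. Both apply $\etaf(-1/u)=(-\ii u)^{1/2}\etaf(u)$ with $u=(8/d)\tau$ to each factor, use the swaps $d\leftrightarrow 8/d$, and do the multiplier bookkeeping, getting $(-\ii)^6(4\tau)^3(2\tau)^3/\bigl((-\ii)^4(8\tau)^2\tau^2\bigr)=-8\tau^2$. Your explicit check of $Y|_2W_8=-Y$ under the $(\det)^{k/2}(c\tau+d)^{-k}$ convention, which the paper leaves implicit, is also right.
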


\begin{proof}
For each divisor $m$ of $8$, the transformation formula $\eta(-1/u)=(-\ii u)^{1/2}\eta(u)$ with $u=8\tau/m$ gives
\[
\etaf(mW_8\tau)=\etaf\!\left(-\frac{m}{8\tau}\right)=\left(\frac{8}{m}\right)^{1/2}(-\ii\tau)^{1/2}\etaf\!\left(\frac{8\tau}{m}\right).
\]
Substituting this into \eqref{eq:t-Y-def} yields
\begin{align*}
t(W_8\tau)
&=\frac{(8/1)^4(8/8)^4}{(8/2)^4(8/4)^4}
(-\ii\tau)^{(8+8-8-8)/2}
\frac{\eta(8\tau)^8\eta(\tau)^8}{\eta(4\tau)^8\eta(2\tau)^8}
=t(\tau),\\
Y(W_8\tau)
&=\frac{(8/2)^3(8/4)^3}{(8/1)^2(8/8)^2}
(-\ii\tau)^{(6+6-4-4)/2}
\frac{\eta(4\tau)^6\eta(2\tau)^6}{\eta(8\tau)^4\eta(\tau)^4}
=8(-\ii\tau)^2Y(\tau)=-8\tau^2Y(\tau).
\end{align*}
This is exactly \eqref{eq:t-invariant} and \eqref{eq:Y-fricke}.
\end{proof}

\begin{proposition}\label{prop:hauptmodul}
At the four cusps $\infty,0,1/2,1/4$ of $\Gamma_0(8)$ one has
\[
\ord(t)=(1,1,-1,-1),
\qquad
\ord(Y)=(0,0,1,1).
\]
Consequently $t$ descends to a modular function on the Fricke quotient $X_0(8)^+=X_0(8)/\langle W_8\rangle$ with one simple zero and one simple pole; in particular $t$ is a Hauptmodul for $\Gamma_0(8)^+$.
\end{proposition}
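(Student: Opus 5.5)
I will compute the orders with Ligozat's formula for eta-quotients $\prod_{\delta\mid N}\eta(\delta\tau)^{r_\delta}$ on $\Gamma_0(N)$. At a cusp $c/d$ with $d\mid N$, the order, in the local uniformizer, is
\[
\frac{N}{24\,\gcd(d,N/d)\,d}\sum_{\delta\mid N}\frac{\gcd(d,\delta)^2\,r_\delta}{\delta}.
\]
For $N=8$ the cusps $\infty,0,1/2,1/4$ correspond to $d=8,1,2,4$, with widths $1,8,2,1$. Before using the formula I will check that $t$ and $Y$ are genuinely modular (weights $0$ and $2$) on $\Gamma_0(8)$, with trivial character. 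This amounts to Newman's conditions: $\sum\delta r_\delta\equiv 0$ and $\sum (N/\delta) r_\delta\equiv0 \pmod{24}$, and $\prod\delta^{r_\delta}$ a square. For $t$ these sums are $8-16-32+64=24$ and $64-32-16+8=24$, with product $2^{24-16-24}\cdot$(square). For $Y$ they are $-4+12+24-32=0$ and the analogous sum. Both satisfy the conditions.

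Next I evaluate the sums. For $t$, with $r=(8,-8,-8,8)$ at $\delta=1,2,4,8$:
\begin{itemize}
\item At $d=8$: $8-16-32+64=24$, times $\tfrac{8}{24\cdot8}$, giving $1$. This matches $t=q+\cdots$.
\item At $d=1$: $8-4-2+1=3$, times $\tfrac{8}{24}$, giving $1$.
\item At $d=2$: $8-16-8+4=-12$, times $\tfrac{8}{24\cdot2\cdot2}$, giving $-1$.
\item At $d=4$: $8-16-32+16=-24$, times $\tfrac{8}{24\cdot4\cdot2}$, giving $-1$.
\end{itemize}
The same arithmetic with $r=(-4,6,6,-4)$ gives $\ord(Y)=(0,0,1,1)$. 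Consistency checks are $\ord(Y)$ at $\infty$ equal to $0$, from \eqref{eq:Y-q}, and the total $\sum\ord(Y)=2=\tfrac{k}{12}[\mathrm{SL}_2(\Z):\Gamma_0(8)]=\tfrac{2}{12}\cdot12$. As a cross-check I will also use Lemma~\ref{lem:t-Y-transform}. Since $W_8$ swaps $\infty\leftrightarrow0$ and $1/2\leftrightarrow1/4$, and $t$ is $W_8$-invariant, the orders at paired cusps must agree. This is indeed the pattern found.

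For the consequence, $t$ is $W_8$-invariant by \eqref{eq:t-invariant}, so it descends to $X_0(8)^+$. Its divisor on $X_0(8)$ is $[\infty]+[0]-[1/2]-[1/4]$, with no zeros or poles in $\Hh$, since eta has none. Under the quotient map $X_0(8)\to X_0(8)^+$ of degree $2$, the pair $\{\infty,0\}$ maps to a single point, and so does $\{1/2,1/4\}$. The map is unramified at these cusps because they are swapped rather than fixed. Hence $t$ has degree $2$ on $X_0(8)$ and degree $1$ on the quotient, with one simple zero and one simple pole. A degree-one function on a compact Riemann surface is an isomorphism to $\mathbb P^1$. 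So $X_0(8)^+$ has genus $0$ and $t$ is a Hauptmodul.

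The main obstacle is bookkeeping: matching each cusp to its $d$ and taking the orders in the correct local parameter (width) convention. I will fix this by checking against the known expansion at $\infty$ and against the Fricke symmetry.
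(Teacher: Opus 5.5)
Your proof is correct and takes the same approach as the paper: Ligozat's formula at the four cusps, the $W_8$-pairing $\infty\leftrightarrow 0$, $1/2\leftrightarrow 1/4$, and a degree count on $X_0(8)^+$. You add some useful care: Newman's conditions for modularity, unramifiedness of $X_0(8)\to X_0(8)^+$ at the swapped cusps, and deducing genus $0$ directly from the degree-one map instead of via $g(X_0(8))=0$ and L\"uroth. The only slip is cosmetic: for $t$ one has $\prod_\delta\delta^{r_\delta}=2^{-8-16+24}=1$ rather than $2^{24-16-24}$, which is a square either way.
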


\begin{proof}
Ligozat's eta-quotient cusp-order formula gives the stated orders for the eta-products in \eqref{eq:t-Y-def}; see \cite{Ligozat}. Since $W_8$ interchanges the cusp pair $\infty\leftrightarrow 0$ and also $1/2\leftrightarrow 1/4$, the divisor of $t$ on the Fricke quotient consists of one simple zero and one simple pole. The curve $X_0(8)$ has genus $0$, hence its Fricke quotient $X_0(8)^+$ also has genus $0$ by Lüroth's theorem. Therefore a nonconstant meromorphic function with one simple pole is a Hauptmodul.
\end{proof}

\section{The companion generating series as an Eichler integral}\label{sec:eichler}

Write
\[
\Dop=q\frac{\dd}{\dd q}=\frac{1}{2\pi\ii}\frac{\dd}{\dd\tau}.
\]
For a holomorphic $q$-series $f(\tau)=\sum_{n\ge 1}a_n q^n$, define its normalized weight $-2$ Eichler integral by
\begin{equation}\label{eq:eichler-def}
\mathcal E_f(\tau):=\sum_{n\ge 1}\frac{a_n}{n^3}q^n
=-\frac{(2\pi\ii)^3}{2}\int_{\tau}^{\ii\infty}f(z)(z-\tau)^2\,\dd z.
\end{equation}
Then
\begin{equation}\label{eq:eichler-D3}
\Dop^3\mathcal E_f=f.
\end{equation}

Define
\begin{equation}\label{eq:E-def}
E(\tau):=\frac{\mathcal B(t(\tau))}{Y(\tau)}.
\end{equation}
Applying the variation-of-constants formula in Yang \cite[Lemmas~1--2, and the example following Lemma~2]{Yang} to the $\thet$-form operators \eqref{eq:theta-ode}--\eqref{eq:theta-inhom} gives
\begin{equation}\label{eq:Phi-def}
\Dop^3E(\tau)=\Phi(\tau):=\left(\frac{\Dop t(\tau)}{t(\tau)}\right)^3
\frac{t(\tau)}{Y(\tau)\bigl(1-24t(\tau)+16t(\tau)^2\bigr)}.
\end{equation}

\begin{lemma}\label{lem:wronskian}
One has the identity
\begin{equation}\label{eq:wronskian}
\left(\frac{\Dop t}{t}\right)^2=Y^2(1-24t+16t^2).
\end{equation}
\end{lemma}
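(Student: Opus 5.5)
The plan is to prove the identity by the standard modular-forms method: both sides are modular forms of the same weight on $\Gamma_0(8)$, so it suffices to compare finitely many Fourier coefficients. First, $\Dop t/t$ is the logarithmic derivative of an eta-quotient. Since $\Dop\log\etaf(m\tau)=\frac{m}{24}E_2(m\tau)$, we get
\[
\frac{\Dop t}{t}=\frac{8}{24}\bigl(E_2(\tau)-2E_2(2\tau)-4E_2(4\tau)+8E_2(8\tau)\bigr).
\]
The coefficients $8,-8,-8,8$ of the eta exponents weighted by $m$ sum to $1-2-4+8$ times $8/24$. Because the weighted sum of the coefficients $\sum c_m/m$ vanishes ($1-1-1+1=0$), the quasimodular parts cancel. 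Hence $\Dop t/t$ is a holomorphic weight-$2$ modular form on $\Gamma_0(8)$, and its square is a holomorphic weight-$4$ form.

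On the right-hand side, $Y$ is a weight-$2$ eta-quotient. Proposition~\ref{prop:hauptmodul} gives its orders at the cusps as $(0,0,1,1)$, so $Y$ is holomorphic. The factor $1-24t+16t^2$ has poles of order $2$ at the cusps $1/2$ and $1/4$, where $t$ has simple poles. These are exactly cancelled by the double zeros of $Y^2$ there. Therefore $Y^2(1-24t+16t^2)$ is holomorphic at every cusp, and since eta-quotients have no zeros or poles in $\Hh$, it is a holomorphic weight-$4$ form on $\Gamma_0(8)$. As a cross-check, one can rewrite the right-hand side as $Y^2(1-24t+16t^2)$ and verify the transformation behavior under $W_8$. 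Lemma~\ref{lem:t-Y-transform} shows that the right-hand side picks up $(-8\tau^2)^2$. On the left, $t\circ W_8=t$ gives $\Dop t/t\,|_2W_8=-\Dop t/t$, so the left-hand side behaves the same way.

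The space $M_4(\Gamma_0(8))$ has dimension $5$. The Sturm bound for weight $4$ and index $[\mathrm{SL}_2(\Z):\Gamma_0(8)]=12$ is $4\cdot12/12=4$. So it suffices to check agreement of the $q$-expansions through $q^4$.

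Using \eqref{eq:t-q}, the left-hand side is computed as follows:
\[
\frac{\Dop t}{t}=1-8q+40q^2-\cdots,
\]
which is best obtained from the Eisenstein expression above. The right-hand side is computed from \eqref{eq:Y-q} and \eqref{eq:t-q}. Matching the coefficients of $q^0,\dots,q^4$ is routine arithmetic.

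The main obstacle is verifying holomorphy at the cusps $1/2$ and $1/4$ carefully, since that is where the cancellation argument lives. If it proves delicate, an alternative is to divide both sides by $Y^2$. The quotient $(\Dop t/t)^2/Y^2$ is then a weight-$0$ function invariant under $\Gamma_0(8)^+$, hence a rational function of the Hauptmodul $t$. Its poles can occur only where $t=\infty$, of order at most $2$. So it is a polynomial in $t$ of degree at most $2$, determined by three $q$-coefficients. This route avoids the Sturm bound entirely, and I would likely present it as the main argument.
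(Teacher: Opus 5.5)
Your first route is the paper's proof: both sides lie in $M_4(\Gamma_0(8))$, the Sturm bound is $4$, and one compares $q$-expansions through $q^4$. Your formula $\Dop t/t=\tfrac13\bigl(E_2(\tau)-2E_2(2\tau)-4E_2(4\tau)+8E_2(8\tau)\bigr)$ with $\sum c_m/m=0$ is a slightly cleaner way to get holomorphy of the left side than the paper's cusp-order argument.

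The route you would make primary is genuinely different, and it is also correct. Consider $R=(\Dop t/t)^2/Y^2$.
\begin{itemize}
\item It is $\Gamma_0(8)^+$-invariant, since numerator and denominator both pick up $64\tau^4$ under $W_8$.
\item It is holomorphic on $\Hh$, because $Y$ has no zeros there.
\item At the cusps it can have poles only at $1/2$ and $1/4$. There $Y^2$ has double zeros and $(\Dop t/t)^2$ is finite.
\end{itemize}
Since $W_8$ swaps these two cusps, they give a single unramified point of $X_0(8)^+$, where $t$ has its simple pole (Proposition~\ref{prop:hauptmodul}). Hence $R$ is a polynomial of degree at most $2$ in $t$. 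The expansion $R=1-24q+208q^2+\cdots$ then forces $R=1-24t+16t^2$.

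Compared with the paper, this uses more structure: the Fricke behaviour and the Hauptmodul property. In return it needs only three coefficients and no dimension count, and it explains why the answer is a quadratic in $t$. The Sturm route needs nothing about $W_8$.

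Two slips need correcting.
\begin{enumerate}
\item The sign under $W_8$ is wrong. From $t\circ W_8=t$ you get $(\Dop t)(W_8\tau)=8\tau^2(\Dop t)(\tau)$, hence $(\Dop t/t)|_2W_8=+\Dop t/t$. This is the opposite sign to $Y|_2W_8=-Y$, and it is exactly why $\Dop t/t$ vanishes at $\tau_*$. The error is harmless because only squares enter.
\item The expansion is wrong. In fact $\Dop t/t=1-8q-8q^2-32q^3+24q^4+\cdots$, not $1-8q+40q^2-\cdots$. With $40q^2$ the $q^2$ coefficients would read $144$ against $48$, and your check would fail. The correct series squares to $1-16q+48q^2+64q^3+624q^4+\cdots$, which matches the right-hand side.
\end{enumerate}
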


\begin{proof}
Both sides belong to $M_4(\Gamma_0(8))$. For the left-hand side: $t$ has no zeros on $\Hh$ (its zeros lie at the cusps $\infty$ and $0$), so $(\Dop t)/t$ is holomorphic on $\Hh$ with weight~$2$, and its square has weight~$4$. Cusp holomorphy follows from $\ord\bigl((\Dop t)/t\bigr)=0$ at every cusp. For the right-hand side: $Y$ is an eta-quotient with no zeros or poles on~$\Hh$, and $1-24t+16t^2$ is holomorphic on~$\Hh$; the cusp orders are $\ord(Y^2)=(0,0,2,2)$ and $\ord(1-24t+16t^2)=(0,0,-2,-2)$, so the product is holomorphic at cusps.

A direct computation gives the $q$-expansions
\[
\left(\frac{\Dop t}{t}\right)^2=1-16q+48q^2+64q^3+624q^4+O(q^5),
\]
\[
Y^2(1-24t+16t^2)=1-16q+48q^2+64q^3+624q^4+O(q^5).
\]
The difference lies in $M_4(\Gamma_0(8))$ and vanishes through $q^4$, hence is identically zero by the Sturm bound.
\end{proof}

\begin{lemma}\label{lem:Phi-modular}
The function $\Phi$ belongs to $M_4(\Gamma_0(8))$.
\end{lemma}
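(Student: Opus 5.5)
The plan is to use Lemma~\ref{lem:wronskian} to rewrite $\Phi$ as a product of objects whose modularity is already known. Since $\Dop t/t$ is a holomorphic weight-$2$ object on $\Hh$, equation \eqref{eq:wronskian} allows one factor $(\Dop t/t)^2$ in \eqref{eq:Phi-def} to be replaced by $Y^2(1-24t+16t^2)$. This gives
\[
\Phi=\frac{\Dop t}{t}\cdot\frac{t\,Y^2(1-24t+16t^2)}{Y(1-24t+16t^2)}=Y\,\Dop t .
\]
One should check that the cancellation of $1-24t+16t^2$ is legitimate. This is a meromorphic identity, and since both sides are meromorphic on $\Hh$, the formula $\Phi=Y\Dop t$ holds identically.

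Next I verify the transformation law of $Y\Dop t$ under $\Gamma_0(8)$. Here $t$ is a weight-$0$ modular function for $\Gamma_0(8)$: it is an eta-quotient whose exponent sums satisfy Newman/Ligozat's conditions, and Proposition~\ref{prop:hauptmodul} already treats it as such. Hence $\Dop t=\frac{1}{2\pi\ii}t'$ transforms with weight $2$. Likewise $Y\in M_2(\Gamma_0(8))$: it is holomorphic on $\Hh$ as an eta-quotient, its cusp orders are $(0,0,1,1)$, and Ligozat's criterion gives weight $2$ and trivial character, since $\prod m^{r_m}=2^{6}4^{6}8^{-4}=2^6$ is a square. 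The product $Y\,\Dop t$ therefore transforms with weight $4$.

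It remains to establish holomorphy. On $\Hh$, $Y$ is holomorphic and $t$ is holomorphic, because its poles lie only at the cusps $1/2$ and $1/4$; hence $\Dop t$ is holomorphic. At the cusps I work with orders, writing $\Dop t=t\cdot(\Dop t/t)$. The proof of Lemma~\ref{lem:wronskian} gives $\ord(\Dop t/t)=0$ at every cusp. Then
\[
\ord(Y\,\Dop t)=\ord(Y)+\ord(t)=(0,0,1,1)+(1,1,-1,-1)=(1,1,0,0)\ge 0 .
\]
So $\Phi$ is holomorphic at all cusps and $\Phi\in M_4(\Gamma_0(8))$; indeed it vanishes at $\infty$ and $0$. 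As a sanity check, $Y\Dop t=q-12q^2+\cdots$ should match the $q$-expansion obtained from \eqref{eq:Phi-def}.

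The main obstacle is making the cusp-order claim $\ord(\Dop t/t)=0$ rigorous at the irregular-width cusps $1/2$ and $1/4$, where the local parameter differs from $q$. I would handle this by observing that $\Dop t/t$ is the logarithmic derivative of an eta-quotient, namely $\sum_m r_m\,m\,E_2(m\tau)/24$ with $\sum r_m m=0$. This exhibits it as an explicit combination of $E_2(m\tau)$ lying in $M_2(\Gamma_0(8))$, so holomorphy at every cusp is automatic. This observation also gives an independent proof that $\Dop t/t$ is a genuine weight-$2$ form, which removes any reliance on the sketchier argument.
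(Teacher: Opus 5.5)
Your proof is correct and follows essentially the paper's route. You simplify $\Phi$ to $Y\,\Dop t$ via Lemma~\ref{lem:wronskian}, then read off the cusp orders $(1,1,0,0)$ from $\ord(Y)$, $\ord(t)$ and $\ord(\Dop t/t)\ge 0$. The only difference is how you justify the last point: you use the $E_2$ expansion $\Dop t/t=\frac1{24}\sum_m r_m m\,E_2(m\tau)$, whereas the paper uses the local computation $\Dop t/t=m/w+O(u)$.

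One slip needs fixing. The quasi-modular anomalies of the $E_2(m\tau)$ cancel because $\sum_m r_m=0$, i.e.\ because $t$ has weight zero. The condition is not $\sum_m r_m m=0$, which actually fails here: $\sum_m r_m m=8-16-32+64=24$. That quantity is $24\,\ord_\infty(t)$ and gives the constant term $1$ of $\Dop t/t$.
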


\begin{proof}
Because $t$ is a weight-$0$ modular function on $\Gamma_0(8)$, the logarithmic derivative $(\Dop t)/t$ has weight $2$. Since $Y$ has weight $2$, the expression \eqref{eq:Phi-def} transforms with weight $4$.

We verify holomorphy on $\Hh$. Using the identity \eqref{eq:wronskian}, the formula \eqref{eq:Phi-def} simplifies:
\begin{equation}\label{eq:Phi-simple}
\Phi=\left(\frac{\Dop t}{t}\right)^3
\frac{t}{Y(1-24t+16t^2)}
=\frac{\Dop t}{t}\cdot\frac{Y^2(1-24t+16t^2)}{Y(1-24t+16t^2)}\cdot t
=Y\cdot\Dop t.
\end{equation}
Since $Y$ and $\Dop t$ are both holomorphic on $\Hh$, so is $\Phi$.

It remains to check holomorphy at the cusps. Let $u$ be a local parameter at a cusp of width $w$, and suppose $t=c u^m+O(u^{m+1})$ with $m\neq 0$ and $c\neq 0$. Since $\Dop=q\,\dd/\dd q=(1/w)\,u\,\dd/\dd u$, one has
\[
\frac{\Dop t}{t}=\frac{m}{w}+O(u),
\]
so $\ord\bigl((\Dop t)/t\bigr)=0$. Using Proposition~\ref{prop:hauptmodul}, we obtain
\[
\ord_{\infty}(t)=\ord_0(t)=1,
\qquad
\ord_{\infty}(Y)=\ord_0(Y)=0,
\]
so at the cusps $\infty$ and $0$ one has
\[
\ord(\Phi)=3\cdot 0+1-0-0=1.
\]
At the cusps $1/2$ and $1/4$,
\[
\ord(t)=-1,
\qquad
\ord(Y)=1,
\qquad
\ord(1-24t+16t^2)=-2,
\]
whence
\[
\ord(\Phi)=3\cdot 0+(-1)-1-(-2)=0.
\]
Thus $\Phi$ is holomorphic at every cusp, hence $\Phi\in M_4(\Gamma_0(8))$.
\end{proof}

Consider the Eisenstein combination
\begin{equation}\label{eq:g-def}
g_8(\tau):=\frac{\Efour(\tau)-21\Efour(2\tau)+84\Efour(4\tau)-64\Efour(8\tau)}{240}
=\sum_{n\ge 1}a_n q^n,
\end{equation}
where
\begin{equation}\label{eq:a-def}
a_n=\sigthree(n)-21\sigthree(n/2)+84\sigthree(n/4)-64\sigthree(n/8).
\end{equation}
The first coefficients of $\Phi$ and $g_8$ are
\begin{equation}\label{eq:Phi-q}
\Phi(\tau)=q-12q^2+28q^3-32q^4+126q^5-336q^6+O(q^7),
\end{equation}
\begin{equation}\label{eq:g-q}
g_8(\tau)=q-12q^2+28q^3-32q^4+126q^5-336q^6+O(q^7).
\end{equation}
The index $[\mathrm{SL}_2(\Z):\Gamma_0(8)]$ is $12$, so the Sturm bound in weight $4$ is
\[
\left\lfloor\frac{4}{12}\cdot 12\right\rfloor=4
\]
(see \cite[Theorem~3.13]{DiamondShurman}). Hence $\Phi-g_8\in M_4(\Gamma_0(8))$ has vanishing coefficients through $q^4$, so it is identically zero.

\begin{proposition}\label{prop:E-eichler}
One has
\begin{equation}\label{eq:Phi-equals-g}
\Phi=g_8,
\end{equation}
and therefore
\begin{equation}\label{eq:E-qseries}
E(\tau)=\sum_{n\ge 1}\frac{a_n}{n^3}q^n
=-\frac{(2\pi\ii)^3}{2}\int_{\tau}^{\ii\infty}g_8(z)(z-\tau)^2\,\dd z.
\end{equation}
Equivalently,
\begin{equation}\label{eq:B-param}
\sum_{n\ge 0}B_n^{(8)}t(\tau)^n=E(\tau)Y(\tau).
\end{equation}
\end{proposition}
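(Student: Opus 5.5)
The identity $\Phi=g_8$ has essentially been established in the discussion preceding the statement, and I would simply assemble it. By Lemma~\ref{lem:Phi-modular}, $\Phi\in M_4(\Gamma_0(8))$. The form $g_8$ also lies in $M_4(\Gamma_0(8))$, because each $\Efour(m\tau)$ with $m\mid 8$ does. Hence $\Phi-g_8\in M_4(\Gamma_0(8))$. Comparing \eqref{eq:Phi-q} with \eqref{eq:g-q}, the coefficients of $q^0,\dots,q^4$ agree. The constant term of $g_8$ vanishes because $1-21+84-64=0$, matching the zero constant term of $\Phi$, which has order $1$ at $\infty$. The Sturm bound $4$ then gives $\Phi=g_8$. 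The only care needed is that \eqref{eq:Phi-q} is computed from the simplified form $\Phi=Y\cdot\Dop t$ of \eqref{eq:Phi-simple}. That form is a finite product of the expansions \eqref{eq:t-q} and \eqref{eq:Y-q}, so the comparison is mechanical.

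Next I would deduce \eqref{eq:E-qseries}. By \eqref{eq:Phi-def}, $\Dop^3E=\Phi=g_8$. By \eqref{eq:eichler-D3}, $\Dop^3\mathcal E_{g_8}=g_8$ as well. Therefore $\Dop^3(E-\mathcal E_{g_8})=0$. Both functions are power series in $q$, since $\mathcal B(t(\tau))$ is a power series in $q$ via \eqref{eq:t-q} and $1/Y$ is a unit power series. The kernel of $\Dop^3$ on $\C[[q]]$ consists of constants, because $\Dop$ multiplies $q^n$ by $n$. So $E-\mathcal E_{g_8}$ is a constant. Since $B_0^{(8)}=0$ and $t=O(q)$, we have $E(\tau)=O(q)$, and $\mathcal E_{g_8}=O(q)$ too, so the constant is $0$. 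The integral representation in \eqref{eq:E-qseries} is then the definition \eqref{eq:eichler-def}. To check that definition termwise, note that $\int_\tau^{\ii\infty}\e^{2\pi\ii nz}(z-\tau)^2\,\dd z=2q^n/(-2\pi\ii n)^3$, which gives the coefficient $a_n/n^3$. Finally, \eqref{eq:B-param} is just \eqref{eq:E-def} rearranged.

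The step needing the most care is the claim that $\Dop^3E=\Phi$ holds exactly, with no extra homogeneous term. This is where Yang's variation of constants enters. Its content is that pulling back $L\mathcal B=z$ along $z=t(\tau)$, with $\mathcal A(t)=Y$, converts $L$ into $Y\cdot(\text{factor})\cdot\Dop^3\circ Y^{-1}$. I would re-verify the factor $(\Dop t/t)^3\,t/(Y(1-24t+16t^2))$ as follows. The leading coefficient of $L$ in $\thet$-form is $1-24z+16z^2$. Also, $\thet=(t/\Dop t)\Dop$ on functions of $t$. The Wronskian identity of Lemma~\ref{lem:wronskian} is precisely what makes the lower-order terms collapse, since the solutions of the homogeneous equation are $Y$, $\tau Y$, $\tau^2Y$. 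If the citation were not accepted, I would check the claim directly: since $\Dop^3(\tau^kY/Y)=0$ for $k=0,1,2$, $L$ and the operator above share a leading term and a three-dimensional kernel, so they coincide.
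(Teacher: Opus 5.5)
Your proposal is correct and follows the paper's proof. You obtain $\Phi=g_8$ from the weight-$4$ Sturm bound applied to \eqref{eq:Phi-q}--\eqref{eq:g-q}; then $\Dop^3(E-\mathcal E_{g_8})=0$ with both functions $O(q)$ forces $E=\mathcal E_{g_8}$, and \eqref{eq:B-param} is just \eqref{eq:E-def} rearranged. Your remark that $\ker\Dop^3$ on $\C[[q]]$ consists of the constants is a cleaner way of saying what the paper phrases as vanishing ``together with their first two $q$-derivatives''.

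Your optional re-check of \eqref{eq:Phi-def} is not required, since the paper takes it from Yang. Also, the kernel-comparison argument you sketch there does not use Lemma~\ref{lem:wronskian}; that identity is instead a consequence of it, via Abel's formula. What your argument does rely on is the standard monodromy fact that $\tau Y$ and $\tau^2Y$ solve the homogeneous equation.
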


\begin{proof}
The equality \eqref{eq:Phi-equals-g} is the Sturm argument above. Since $\Dop^3\mathcal E_{g_8}=g_8$ by \eqref{eq:eichler-D3}, both $E$ and $\mathcal E_{g_8}$ satisfy the same third $q$-derivative equation. Both vanish at $q=0$ together with their first two $q$-derivatives, because $E(\tau)=O(q)$ from \eqref{eq:E-def}. Hence $E=\mathcal E_{g_8}$, which is \eqref{eq:E-qseries}. Multiplying by $Y(\tau)$ gives \eqref{eq:B-param}.
\end{proof}

\section{Atkin--Lehner transformation law and the explicit period polynomial}\label{sec:period}

The Fricke involution acts on $g_8$ as follows.

\begin{lemma}\label{lem:g-transform}
One has
\begin{equation}\label{eq:g-transform}
g_8|_4W_8=-g_8.
\end{equation}
\end{lemma}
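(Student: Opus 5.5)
The fastest route uses \eqref{eq:Phi-simple} together with Lemma~\ref{lem:t-Y-transform}, since Proposition~\ref{prop:E-eichler} gives $g_8=\Phi=Y\cdot\Dop t$. Nothing about Eisenstein series would then need to be computed; the sign comes from the weight-$2$ behaviour of $Y$ and the invariance of $t$.

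\textbf{Step 1: transform $\Dop t$.} Differentiate the identity $t(W_8\tau)=t(\tau)$ from \eqref{eq:t-invariant} with respect to $\tau$. Since $\frac{\dd}{\dd\tau}(W_8\tau)=\frac{1}{8\tau^2}$, this gives
\[
(\Dop t)(W_8\tau)=8\tau^2\,(\Dop t)(\tau),
\]
because $\Dop=\frac{1}{2\pi\ii}\frac{\dd}{\dd\tau}$.

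\textbf{Step 2: combine with $Y$.} Multiply this by $Y(W_8\tau)=-8\tau^2Y(\tau)$ from \eqref{eq:Y-fricke}. This gives
\[
g_8(W_8\tau)=-64\tau^4 g_8(\tau).
\]

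\textbf{Step 3: convert to slash notation.} With the normalization already used for $Y|_2W_8=-Y$, the weight-$k$ slash is
\[
(f|_kW_8)(\tau)=\det(W_8)^{k/2}(8\tau)^{-k}f(W_8\tau).
\]
For $k=2$ this gives $8\cdot(64\tau^2)^{-1}\cdot(-8\tau^2)Y=-Y$, which matches the earlier normalization. For $k=4$ it gives
\[
g_8|_4W_8=64\cdot(8\tau)^{-4}\cdot(-64\tau^4)\,g_8=-g_8,
\]
which is \eqref{eq:g-transform}.

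\textbf{Main obstacle and cross-check.} The main obstacle is keeping the slash-operator normalization consistent, in particular the factor $\det^{k/2}$ and the sign of $(8\tau)^{-k}$. Checking it against the weight-$2$ case as above settles this.

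As an independent check, I would use the Eisenstein form directly. From $E_4(-1/\tau)=\tau^4E_4(\tau)$ one gets
\[
E_4(mW_8\tau)=\Bigl(\frac{8\tau}{m}\Bigr)^4E_4\Bigl(\frac{8\tau}{m}\Bigr).
\]
Under $W_8$ the terms $E_4(m\tau)$ are permuted via $m\mapsto 8/m$. The coefficient vector $(1,-21,84,-64)$ is sent, after rescaling by $64/m^4$ for the four values $m=1,2,4,8$, to $(64,-84,21,-1)$. This is exactly $-1$ times the original vector reordered under $m\mapsto 8/m$, which confirms the sign.
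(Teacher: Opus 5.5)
Your argument is correct, but your main route differs from the paper's. The paper's proof is exactly your cross-check. It applies $\Efour(-1/u)=u^4\Efour(u)$ to get $\Efour(m\tau)|_4W_8=\frac{64}{m^4}\Efour(8\tau/m)$ for $m\mid 8$. It then observes that, after rescaling and reindexing by $m\mapsto 8/m$, the coefficient vector $(1,-21,84,-64)$ becomes its own negative.

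Your primary proof avoids Eisenstein series. It rewrites $g_8=\Phi=Y\cdot\Dop t$ using \eqref{eq:Phi-simple} and Proposition~\ref{prop:E-eichler}, and applies the eta-quotient laws of Lemma~\ref{lem:t-Y-transform}. In slash language, $(\Dop t)|_2W_8=\Dop t$ because $t$ is Fricke-invariant, $Y|_2W_8=-Y$, and the $\det^{k/2}$-normalized slash is multiplicative on products. So the eigenvalue $-1$ of $g_8$ is simply inherited from $Y$. This is not circular: $\Phi=g_8$ is established via the Wronskian identity, Lemma~\ref{lem:Phi-modular}, and the Sturm bound, none of which uses the Fricke action on $g_8$. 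Your slash normalization also matches the paper's, as your weight-$2$ check confirms.

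Your route gives a structural reason for the sign with no Eisenstein bookkeeping. The cost is that the lemma then depends on the whole chain behind $\Phi=g_8$, including finite $q$-expansion verifications. The paper's computation needs only the modularity of $\Efour$ and is independent of the eta-product parametrization.
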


\begin{proof}
For each divisor $m$ of $8$,
\[
(\Efour(m\tau)|_4W_8)(\tau)=8^2(8\tau)^{-4}\Efour\!\left(-\frac{m}{8\tau}\right).
\]
Using $\Efour(-1/u)=u^4\Efour(u)$ with $u=8\tau/m$, we obtain
\[
\Efour(m\tau)|_4W_8=\frac{64}{m^4}\Efour\!\left(\frac{8\tau}{m}\right).
\]
Hence
\begin{alignat*}{2}
\Efour(\tau)|_4W_8&=64\Efour(8\tau),
&\qquad
\Efour(2\tau)|_4W_8&=4\Efour(4\tau),\\
\Efour(4\tau)|_4W_8&=\tfrac14\Efour(2\tau),
&\qquad
\Efour(8\tau)|_4W_8&=\tfrac1{64}\Efour(\tau).
\end{alignat*}
Substituting these identities into \eqref{eq:g-def} gives \eqref{eq:g-transform}.
\end{proof}

The Mellin transform of $g_8$ is completely explicit.

\begin{proposition}\label{prop:L-function}
The Dirichlet series of $g_8$ is
\begin{equation}\label{eq:L-factor}
L(g_8,s):=\sum_{n\ge 1}\frac{a_n}{n^s}
=\zeta(s)\zeta(s-3)(1-2^{-s})(1-2^{2-s})(1-2^{4-s}).
\end{equation}
This is Golyshev's $L(F,s)$ \cite[Section~3]{Gol09} in the present
normalization $g_8 = F$; the factorized form displays the Euler factor
at~$2$ explicitly.
In particular,
\begin{equation}\label{eq:L-values}
L(g_8,3)=\frac{7}{32}\zeta(3),
\qquad
L(g_8,2)=0,
\qquad
L(g_8,1)=-\frac{7}{8\pi^2}\zeta(3).
\end{equation}
Moreover,
\begin{equation}\label{eq:L-functional}
\Gamma(s)\left(\frac{\pi}{\sqrt2}\right)^{-s}L(g_8,s+3)
=\Gamma(-s-2)\left(\frac{\pi}{\sqrt2}\right)^{s+2}L(g_8,1-s).
\end{equation}
\end{proposition}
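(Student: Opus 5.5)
The identity \eqref{eq:L-factor} is a formal Dirichlet-series computation, the special values \eqref{eq:L-values} follow by evaluating the factored form, and the functional equation \eqref{eq:L-functional} comes from a Hecke-type Mellin argument using Lemma~\ref{lem:g-transform}.

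\emph{Factorization.} For $m\mid 8$ and $\Re s>4$, reindexing $n=mk$ gives
\[
\sum_{n\ge1}\sigthree(n/m)n^{-s}=m^{-s}\sum_{k\ge1}\sigthree(k)k^{-s}=m^{-s}\zeta(s)\zeta(s-3).
\]
Summing these four series with the weights in \eqref{eq:a-def}, and putting $x=2^{-s}$, gives
\[
L(g_8,s)=\zeta(s)\zeta(s-3)\,(1-21x+84x^2-64x^3).
\]
Expanding shows $(1-x)(1-4x)(1-16x)=1-21x+84x^2-64x^3$. Since $4x=2^{2-s}$ and $16x=2^{4-s}$, this is \eqref{eq:L-factor}. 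Comparing with Golyshev's Euler factor $1-\tfrac{21}{4}2^{2-s}+\tfrac{21}{4}4^{2-s}-8^{2-s}$ term by term identifies $g_8$ with his $F$.

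\emph{Special values.} Each value is read off from the factored form.
\begin{itemize}
\item At $s=3$ we have $x=1/8$, so the Euler factor is $\tfrac78\cdot\tfrac12\cdot(-1)=-\tfrac7{16}$. Together with $\zeta(0)=-\tfrac12$ this gives $\tfrac7{32}\zeta(3)$.
\item At $s=2$ the factor $1-2^{2-s}$ vanishes, while $\zeta(2)\zeta(-1)$ is finite, so $L(g_8,2)=0$.
\item At $s=1$ there is a $0\cdot\infty$ situation: $\zeta(s)$ has a simple pole, but $\zeta(s-3)$ has the trivial zero $\zeta(-2)=0$. The product therefore tends to $\zeta'(-2)=-\zeta(3)/(4\pi^2)$. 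The Euler factor at $s=1$ is $\tfrac12\cdot(-1)\cdot(-7)=\tfrac72$, giving $-\tfrac{7}{8\pi^2}\zeta(3)$.
\end{itemize}
This limit at $s=1$ is the only delicate evaluation.

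\emph{Functional equation.} Since $g_8=O(q)$ at $\infty$ and $g_8|_4W_8=-g_8$, $g_8$ also decays at the cusp $0$. Set
\[
\Lambda(s):=\int_0^\infty g_8\!\left(\frac{\ii y}{\sqrt8}\right)y^{s-1}\,\dd y=\left(\frac{2\pi}{\sqrt8}\right)^{-s}\Gamma(s)L(g_8,s),
\]
noting that $2\pi/\sqrt8=\pi/\sqrt2$. The substitution $y\mapsto1/y$ sends $\ii y/\sqrt8$ to $W_8(\ii y/\sqrt8)$. Unwinding Lemma~\ref{lem:g-transform} at such points gives $g_8(\ii/(\sqrt8y))=-y^4g_8(\ii y/\sqrt8)$. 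Splitting the integral at $y=1$ in the standard way shows that $\Lambda$ extends to all $s$ (entire up to the usual bookkeeping) and satisfies $\Lambda(s)=-\Lambda(4-s)$. Entirety is consistent with the factorization: the pole of $\zeta(s)$ at $s=1$ is cancelled by $\zeta(-2)=0$, and the pole of $\zeta(s-3)$ at $s=4$ is cancelled by $1-2^{4-s}$.

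Replacing $s$ by $s+3$ gives
\[
\Gamma(s+3)(\pi/\sqrt2)^{-s-3}L(g_8,s+3)=-\Gamma(1-s)(\pi/\sqrt2)^{s-1}L(g_8,1-s).
\]
Now use $\Gamma(s+3)=s(s+1)(s+2)\Gamma(s)$ and $\Gamma(1-s)=-s(s+1)(s+2)\Gamma(-s-2)$. Cancelling the common cubic, the sign from $\Gamma(1-s)$ absorbs the $-1$. Multiplying both sides by $(\pi/\sqrt2)^3$ then yields exactly \eqref{eq:L-functional}.

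The main obstacle is sign bookkeeping in this last step, namely getting the constant $i^k\varepsilon=-1$ right. Alternatively, one can bypass the Mellin argument by deriving \eqref{eq:L-functional} directly from the Riemann functional equation applied to $\zeta(s)$ and $\zeta(s-3)$ in \eqref{eq:L-factor}, checking that the Euler factor transforms as $E(4-s)=-2^{2s-4}E(s)$. I would use this as a cross-check.
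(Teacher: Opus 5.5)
Your proof is correct. The factorization and the three special values are handled exactly as in the paper, but your main route to the functional equation \eqref{eq:L-functional} is different.

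\textbf{The paper's route.} The paper never uses the Fricke involution here. It inserts the Riemann functional equation for $\zeta(s)$ and $\zeta(s+3)$ into the factored form. Then $2\sin(\pi s/2)\cos(\pi s/2)=\sin(\pi s)$ and $\Gamma(s)\Gamma(1-s)=\pi/\sin(\pi s)$ give $\Gamma(s)(\pi/\sqrt2)^{-s}\zeta(s)\zeta(s+3)=-2^{3s+3}\Gamma(-s-2)(\pi/\sqrt2)^{s+2}\zeta(1-s)\zeta(-s-2)$. Finally it checks that $-2^{3s+3}$ times the Euler factor at $s+3$ equals the Euler factor at $1-s$. This is exactly what you relegate to a cross-check.

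\textbf{Your route.} Your Hecke argument reads the sign off the eigenvalue in Lemma~\ref{lem:g-transform}, and the details check out:
$W_8(\ii y/\sqrt8)=\ii/(\sqrt8\,y)$ and $g_8(\ii/(\sqrt8\,y))=-y^4g_8(\ii y/\sqrt8)$, hence $\Lambda(s)=-\Lambda(4-s)$. The shifts $\Gamma(s+3)=s(s+1)(s+2)\Gamma(s)$ and $\Gamma(1-s)=-s(s+1)(s+2)\Gamma(-s-2)$ then turn this into \eqref{eq:L-functional}.
Since $g_8(\ii y/\sqrt8)$ decays exponentially as $y\to\infty$, and by the Fricke relation also as $y\to 0$, the Mellin integral converges for every $s$. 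So entirety of $\Lambda$ is immediate, not merely ``up to bookkeeping''.

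\textbf{Comparison.} Your route makes the minus sign visibly the Fricke eigenvalue. It also uses the same modular input that later drives Proposition~\ref{prop:period-polynomial}. The paper's route needs nothing about $g_8$ beyond its Dirichlet series and is independent of Lemma~\ref{lem:g-transform}, at the cost of trigonometric and Gamma-function bookkeeping.

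\textbf{A correction to your cross-check.} Write $e_2(s):=(1-2^{-s})(1-2^{2-s})(1-2^{4-s})$ for the Euler factor (your $E(s)$). Then $e_2(4-s)=-2^{3s-6}e_2(s)$, not $-2^{2s-4}e_2(s)$. For instance, $e_2(1)=7/2$ and $e_2(3)=-7/16=-2^{-3}\cdot\tfrac72$.

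The exponent $3s-6$ is forced by $(\pi/\sqrt2)^{-s}=(2\pi)^{-s}2^{3s/2}$ together with the invariance of $(2\pi)^{-s}\Gamma(s)\zeta(s)\zeta(s-3)$ under $s\mapsto 4-s$. The paper's bracket identity $-2^{3s+3}e_2(s+3)=e_2(1-s)$ is exactly this statement with $s$ shifted by $3$. With the identity as you stated it, the cross-check would appear to fail.
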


\begin{proof}
From \eqref{eq:a-def} and the identity
\[
\sum_{n\ge 1}\frac{\sigthree(n)}{n^s}=\zeta(s)\zeta(s-3)
\]
we obtain
\begin{align*}
L(g_8,s)
&=\zeta(s)\zeta(s-3)\bigl(1-21\cdot 2^{-s}+84\cdot 4^{-s}-64\cdot 8^{-s}\bigr)\\
&=\zeta(s)\zeta(s-3)(1-2^{-s})(1-2^{2-s})(1-2^{4-s}),
\end{align*}
which is \eqref{eq:L-factor}.

Substituting $s=3$ gives
\[
L(g_8,3)=\zeta(3)\zeta(0)\Bigl(1-\frac18\Bigr)\Bigl(1-\frac12\Bigr)(1-2)=\frac{7}{32}\zeta(3).
\]
At $s=2$ the factor $1-2^{2-s}$ vanishes. At $s=1$, using
\[
\zeta(s)=\frac{1}{s-1}+O(1),
\qquad
\zeta(s-3)=(s-1)\zeta'(-2)+O\bigl((s-1)^2\bigr),
\]
we find
\[
L(g_8,1)=\zeta'(-2)\Bigl(1-\frac12\Bigr)(1-2)(1-8)
=\frac72\,\zeta'(-2)=-\frac{7}{8\pi^2}\zeta(3),
\]
using $\zeta'(-2)=-\zeta(3)/(4\pi^2)$. This proves \eqref{eq:L-values}.

For the functional equation, substitute \eqref{eq:L-factor} at $s+3$:
\begin{align*}
\Gamma(s)\left(\frac{\pi}{\sqrt2}\right)^{-s}L(g_8,s+3)
&=\Gamma(s)\left(\frac{\pi}{\sqrt2}\right)^{-s}\zeta(s+3)\zeta(s)\\
&\qquad\times(1-2^{-s-3})(1-2^{-s-1})(1-2^{1-s}).
\end{align*}
Applying the zeta functional equation
\[
\zeta(u)=2^u\pi^{u-1}\sin\!\left(\frac{\pi u}{2}\right)\Gamma(1-u)\zeta(1-u)
\]
to $u=s$ and $u=s+3$, then using
\[
\sin\!\left(\frac{\pi(s+3)}{2}\right)=-\cos\!\left(\frac{\pi s}{2}\right),
\qquad
2\sin\!\left(\frac{\pi s}{2}\right)\cos\!\left(\frac{\pi s}{2}\right)=\sin(\pi s),
\]
followed by the reflection identity $\Gamma(s)\Gamma(1-s)=\pi/\sin(\pi s)$, yields
\[
\Gamma(s)\left(\frac{\pi}{\sqrt2}\right)^{-s}\zeta(s+3)\zeta(s)
=-2^{3s+3}\Gamma(-s-2)\left(\frac{\pi}{\sqrt2}\right)^{s+2}\zeta(1-s)\zeta(-s-2).
\]
Multiplying by the Euler factors gives
\begin{align*}
&\Gamma(s)\left(\frac{\pi}{\sqrt2}\right)^{-s}L(g_8,s+3)\\
&\qquad=\Gamma(-s-2)\left(\frac{\pi}{\sqrt2}\right)^{s+2}\zeta(1-s)\zeta(-s-2)\\
&\qquad\qquad\times\Bigl(-2^{3s+3}(1-2^{-s-3})(1-2^{-s-1})(1-2^{1-s})\Bigr).
\end{align*}
A direct simplification shows that the bracket equals
$(1-2^{s-1})(1-2^{s+1})(1-2^{s+3})$: indeed, writing
$2^{3s+3}=2^{s+3}\cdot 2^{s+1}\cdot 2^{s-1}$ and using $2^a(1-2^{-a})=-(1-2^a)$
for $a=s+3$, $s+1$, $s-1$ turns the bracket into $-(-1)^3$ times the claimed product.
This is exactly the Euler factor of $L(g_8,1-s)$. Hence \eqref{eq:L-functional} follows.
\end{proof}

The next identity is the weight $-2$ Bol identity specialized to $W_8$.

\begin{lemma}\label{lem:bol}
For every holomorphic function $F$ on $\Hh$ one has
\begin{equation}\label{eq:bol}
\frac{\dd^3}{\dd\tau^3}\bigl(8\tau^2F(W_8\tau)\bigr)
=\frac{1}{64\tau^4}F'''(W_8\tau).
\end{equation}
Equivalently,
\[
\Dop^3(F|_{-2}W_8)=(\Dop^3F)|_4W_8.
\]
\end{lemma}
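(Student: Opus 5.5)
The plan is to prove \eqref{eq:bol} by a direct chain-rule computation. An alternative would be to reduce to the classical Bol identity for $\mathrm{SL}_2(\mathbb{R})$ by writing $W_8=\sqrt8\,\gamma$ with $\gamma=\begin{pmatrix}0&-1/\sqrt8\\ \sqrt8&0\end{pmatrix}$. Then $(c\tau+d)^2=8\tau^2$ and $(c\tau+d)^{-4}=1/(64\tau^4)$, so \eqref{eq:bol} is exactly Bol's identity $\frac{\dd^3}{\dd\tau^3}\bigl[(c\tau+d)^2F(\gamma\tau)\bigr]=(c\tau+d)^{-4}F'''(\gamma\tau)$. Since the direct computation is short and self-contained, I would do that instead.

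Set $w=W_8\tau=-1/(8\tau)$ and $h(\tau)=8\tau^2$. Then $\dd w/\dd\tau=1/(8\tau^2)=1/h$, $h'/h=2/\tau$, and $h''=16$. I would differentiate three times, writing $F^{(j)}$ for $F^{(j)}(w)$:
\[
\frac{\dd}{\dd\tau}\bigl(hF(w)\bigr)=h'F+F',\qquad
\frac{\dd^2}{\dd\tau^2}\bigl(hF(w)\bigr)=16F+\frac{2}{\tau}F'+\frac{1}{8\tau^2}F''.
\]
Differentiating once more gives
\[
\frac{2}{\tau^2}F'-\frac{2}{\tau^2}F'+\frac{1}{4\tau^3}F''-\frac{1}{4\tau^3}F''+\frac{1}{64\tau^4}F'''.
\]
The $F'$ and $F''$ terms cancel in pairs, and what remains is \eqref{eq:bol}. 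The only thing to be careful about is this bookkeeping of the lower-order terms; there is no conceptual obstacle.

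For the equivalent slash-operator form, I would unwind the normalization $f|_kW_8=8^{k/2}(8\tau)^{-k}f(W_8\tau)$, which is the one used in the proof of Lemma~\ref{lem:g-transform}. With $k=-2$ this gives $F|_{-2}W_8=8^{-1}(8\tau)^2F(W_8\tau)=8\tau^2F(W_8\tau)$. With $k=4$ it gives $f|_4W_8=64\,(8\tau)^{-4}f(W_8\tau)=f(W_8\tau)/(64\tau^4)$. Since $\Dop=(2\pi\ii)^{-1}\dd/\dd\tau$, we have $\Dop^3F=(2\pi\ii)^{-3}F'''$. Dividing \eqref{eq:bol} by $(2\pi\ii)^3$ then gives $\Dop^3(F|_{-2}W_8)=(\Dop^3F)|_4W_8$.
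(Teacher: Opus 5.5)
Your proof is correct and follows the paper's argument: differentiate $8\tau^2F(W_8\tau)$ directly, observe that the $F'$ and $F''$ terms cancel so only $8\tau^2(W_8')^3F'''(W_8\tau)=\frac{1}{64\tau^4}F'''(W_8\tau)$ survives, then divide by $(2\pi\ii)^3$. You additionally write out the intermediate derivatives and check the slash normalizations $8^{k/2}(8\tau)^{-k}$ for $k=-2,4$; the paper leaves both steps to the reader.
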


\begin{proof}
Since
\[
W_8'(\tau)=\frac{1}{8\tau^2},
\qquad
W_8''(\tau)=-\frac{1}{4\tau^3},
\qquad
W_8'''(\tau)=\frac{3}{4\tau^4},
\]
a direct differentiation of $8\tau^2F(W_8\tau)$ shows that all terms involving $F$, $F'$, and $F''$ cancel in the third derivative. The surviving term is
\[
8\tau^2\bigl(W_8'(\tau)\bigr)^3F'''(W_8\tau)
=8\tau^2(8\tau^2)^{-3}F'''(W_8\tau)
=\frac{1}{64\tau^4}F'''(W_8\tau),
\]
which is \eqref{eq:bol}. Dividing by $(2\pi\ii)^3$ gives the formulation with $\Dop$.
\end{proof}

We now compute the period polynomial explicitly.

\begin{proposition}[Explicit period polynomial]\label{prop:period-polynomial}
Let
\begin{equation}\label{eq:P8-def}
P_8(\tau):=(E|_{-2}W_8)(\tau)+E(\tau).
\end{equation}
Then $P_8$ is a polynomial of degree at most $2$, and in fact
\begin{equation}\label{eq:period-polynomial}
P_8(\tau)=\frac{7}{32}\zeta(3)(8\tau^2+1).
\end{equation}
Equivalently,
\begin{equation}\label{eq:E-transform}
(E|_{-2}W_8)(\tau)+E(\tau)=\frac{7}{32}\zeta(3)(8\tau^2+1).
\end{equation}
\end{proposition}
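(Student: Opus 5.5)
The plan has two stages. First we show that $P_8$ is a polynomial of degree at most $2$, using only the transformation laws already proved. Then we identify its coefficients by a Mellin--Barnes residue computation on the imaginary axis, using Proposition~\ref{prop:L-function}.

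\emph{Polynomiality.} Apply $\Dop^3$ to \eqref{eq:P8-def}. Lemma~\ref{lem:bol} gives $\Dop^3(E|_{-2}W_8)=(\Dop^3E)|_4W_8$. By \eqref{eq:Phi-def} and \eqref{eq:Phi-equals-g}, $\Dop^3E=g_8$. Lemma~\ref{lem:g-transform} then yields
\[
\Dop^3P_8=g_8|_4W_8+g_8=0 .
\]
So $P_8$ is holomorphic on $\Hh$ with vanishing third derivative, and is therefore a polynomial of degree at most $2$. By the identity theorem it suffices to determine $P_8(\ii y)$ for $y>0$. Here
\[
(E|_{-2}W_8)(\ii y)=8(\ii y)^2E\bigl(\ii/(8y)\bigr)=-8y^2E\bigl(\ii/(8y)\bigr).
\]

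\emph{Mellin--Barnes.} From \eqref{eq:E-qseries}, termwise integration gives
\[
\int_0^\infty E(\ii y)\,y^{s-1}\,\dd y=(2\pi)^{-s}\Gamma(s)L(g_8,s+3)
\]
for $\operatorname{Re}s>1$. Hence, for $c>1$,
\[
E(\ii y)=\frac{1}{2\pi\ii}\int_{(c)}(2\pi y)^{-s}\Gamma(s)L(g_8,s+3)\,\dd s .
\]
We shift the contour to $\operatorname{Re}s=c'<-3$; the vertical-strip growth bounds follow from \eqref{eq:L-factor} and Stirling. We then list the poles crossed.
\begin{itemize}
\item At $s=1$, the possible pole of $\zeta(s+3)\zeta(s)$ is cancelled by the factor $(1-2^{1-s})$, which vanishes there.
\item At $s=0$, the residue is $L(g_8,3)=\frac{7}{32}\zeta(3)$.
\item At $s=-1$, the residue is $-2\pi y\,L(g_8,2)=0$. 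This explains the absence of a linear term.
\item At $s=-2$, the residue is $\tfrac12(2\pi y)^2L(g_8,1)=-\tfrac74\zeta(3)y^2$, by \eqref{eq:L-values}.
\end{itemize}
This last value equals $\frac{7}{32}\zeta(3)\cdot 8(\ii y)^2$, exactly the quadratic term claimed in \eqref{eq:period-polynomial}. In the integral over $\operatorname{Re}s=c'$ we substitute $s\mapsto -s-2$ and apply the functional equation \eqref{eq:L-functional}, with $(2\pi)$ rewritten in terms of $\pi/\sqrt2$ and $8y$. This should turn the shifted integral into exactly $-(-8y^2)E(\ii/(8y))$, i.e.\ $-(E|_{-2}W_8)(\ii y)$. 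Combining the pieces gives
\[
E(\ii y)+(E|_{-2}W_8)(\ii y)=\frac{7}{32}\zeta(3)\bigl(1-8y^2\bigr),
\]
which is \eqref{eq:period-polynomial} at $\tau=\ii y$, and hence everywhere.

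\emph{Main obstacle.} The delicate step is the bookkeeping in this last substitution. The powers of $2$, $\pi$ and $y$ must combine so that the remaining integral is precisely the Mellin representation of $8y^2E(\ii/(8y))$ with the correct overall sign. Here the normalization $(\pi/\sqrt2)^{-s}$ in \eqref{eq:L-functional} must be matched against $(2\pi y)^{-s}$ evaluated at the argument $1/(8y)$. I would check this first by tracking exponents of $2$ and of $y$ separately. A secondary check is the sign convention for $F|_{-2}W_8$, namely $8\tau^2F(W_8\tau)$, consistent with Lemma~\ref{lem:bol}. The decay justifying the contour shift is routine.
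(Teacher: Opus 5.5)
Your proposal is correct and follows essentially the paper's proof: Bol's identity together with $g_8|_4W_8=-g_8$ gives polynomiality, and a Mellin--Barnes shift with residues at $s=0,-1,-2$ plus \eqref{eq:L-functional} for the reflected integral gives the coefficients. The only real difference is the parametrization. The paper works on the Fricke-invariant geodesic $\tau=\ii y/(2\sqrt2)$, where the reflected integral is visibly $y^2F(1/y)$. In your normalization the integrand is $\Lambda(s)(2\sqrt2\,y)^{-s}$ with $\Lambda(s)=\Gamma(s)(\pi/\sqrt2)^{-s}L(g_8,s+3)=\Lambda(-s-2)$, and $s\mapsto -s-2$ does yield $(2\sqrt2\,y)^2E(\ii/(8y))=8y^2E(\ii/(8y))$, exactly as you predicted.

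One small bookkeeping omission: a line $\operatorname{Re}s=c'<-3$ also crosses the pole of $\Gamma(s)$ at $s=-3$. It is harmless because $L(g_8,0)=0$ by the factor $1-2^{-s}$ in \eqref{eq:L-factor}. You should also take $-4<c'<-3$ (the paper's $c'=-c-2$ with $1<c<2$), so that no further poles need discussion.
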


\begin{proof}
Set
\[
H(\tau):=(E|_{-2}W_8)(\tau)+E(\tau).
\]
By Lemma~\ref{lem:bol}, Proposition~\ref{prop:E-eichler}, and Lemma~\ref{lem:g-transform},
\[
\Dop^3H=(\Dop^3E)|_4W_8+\Dop^3E=g_8|_4W_8+g_8=0.
\]
Hence $H$ is a polynomial of degree at most $2$.

To determine it explicitly, restrict to the $W_8$-invariant geodesic
\begin{equation}\label{eq:tauy-def}
\tau(y):=\frac{\ii y}{2\sqrt2}
\qquad(y>0).
\end{equation}
Then $W_8\tau(y)=\tau(1/y)$, and \eqref{eq:E-qseries} gives
\begin{equation}\label{eq:Fy-def}
F(y):=E(\tau(y))=\sum_{n\ge 1}\frac{a_n}{n^3}\exp\!\left(-\frac{\pi ny}{\sqrt2}\right).
\end{equation}
Fix $1<c<2$. Mellin inversion yields
\begin{equation}\label{eq:mellin}
F(y)=\frac{1}{2\pi\ii}\int_{(c)}
\Gamma(s)\left(\frac{\pi}{\sqrt2}\right)^{-s}L(g_8,s+3)y^{-s}\,\dd s,
\end{equation}
because
\[
\int_0^{\infty}\exp\!\left(-\frac{\pi ny}{\sqrt2}\right)y^{s-1}\,\dd y
=\Gamma(s)\left(\frac{\pi n}{\sqrt2}\right)^{-s}
\qquad(\Re(s)>0).
\]
By Stirling's formula, the integrand in \eqref{eq:mellin} decays exponentially on vertical lines. We may therefore shift the contour to $\Re(s)=-c-2$. In the strip $-c-2<\Re(s)<c$, the integrand $M(s)y^{-s}$ has simple poles only at $s=0,-1,-2$ (from $\Gamma(s)$); the pole of $\Gamma$ at $s=-3$ is cancelled by the zero $L(g_8,0)=0$. The residue at $s=-1$ vanishes because $L(g_8,2)=0$.

Write
\[
M(s):=\Gamma(s)\left(\frac{\pi}{\sqrt2}\right)^{-s}L(g_8,s+3).
\]
Using the functional equation \eqref{eq:L-functional} and the substitution $u=-s-2$, we obtain
\begin{align*}
\frac{1}{2\pi\ii}\int_{(-c-2)}M(s)y^{-s}\,\dd s
&=\frac{1}{2\pi\ii}\int_{(c)}M(u)y^{u+2}\,\dd u\\
&=y^2F(1/y).
\end{align*}
Hence the residue theorem gives
\begin{equation}\label{eq:residue-identity}
F(y)-y^2F(1/y)
=\Res_{s=0}\bigl(M(s)y^{-s}\bigr)+\Res_{s=-2}\bigl(M(s)y^{-s}\bigr).
\end{equation}
The residues are
\[
\Res_{s=0}\bigl(M(s)y^{-s}\bigr)=L(g_8,3)=\frac{7}{32}\zeta(3),
\]
\[
\Res_{s=-2}\bigl(M(s)y^{-s}\bigr)
=\frac12\left(\frac{\pi}{\sqrt2}\right)^2L(g_8,1)y^2
=-\frac{7}{32}\zeta(3)y^2.
\]
Substituting into \eqref{eq:residue-identity} yields
\begin{equation}\label{eq:F-functional}
F(y)-y^2F(1/y)=\frac{7}{32}\zeta(3)(1-y^2).
\end{equation}
On the geodesic \eqref{eq:tauy-def},
\[
(E|_{-2}W_8)(\tau(y))=8\tau(y)^2E(W_8\tau(y))=-y^2F(1/y),
\]
so \eqref{eq:F-functional} becomes
\[
H(\tau(y))=\frac{7}{32}\zeta(3)(1-y^2).
\]
Since $8\tau(y)^2+1=1-y^2$, this is exactly
\[
H(\tau(y))=\frac{7}{32}\zeta(3)(8\tau(y)^2+1)
\qquad(y>0).
\]
Both sides are polynomials of degree at most $2$ in $\tau$ and they agree on the infinite set $\{\tau(y):y>0\}$, so they are identical. This proves \eqref{eq:period-polynomial} and \eqref{eq:E-transform}.
\end{proof}

\section{Singularity analysis at the dominant critical value}\label{sec:singularity}

The fixed point of $W_8$ in the upper half-plane is
\begin{equation}\label{eq:tau-star}
\tau_*:=\frac{\ii}{2\sqrt2},
\qquad
W_8(\tau_*)=\tau_*,
\qquad
8\tau_*^2+1=0.
\end{equation}
Set
\begin{equation}\label{eq:t0-def}
t_0:=t(\tau_*).
\end{equation}
Along the invariant geodesic \eqref{eq:tauy-def}, let $q=\exp(-\pi y/\sqrt2)\in(0,1)$. From \eqref{eq:t-Y-def} one obtains the product formula
\begin{equation}\label{eq:t-product}
t(\tau(y))
=q\prod_{n\ge 1}\left(\frac{1+q^{4n}}{1+q^n}\right)^8.
\end{equation}
Hence $0<t(\tau(y))<1$ for all $y>0$, and in particular
\begin{equation}\label{eq:t0-range}
0<t_0<1.
\end{equation}

The singular point $t_0$ can now be identified exactly.

\begin{proposition}\label{prop:t0-value}
One has
\begin{equation}\label{eq:t0-value}
t_0=\frac{3-2\sqrt2}{4}.
\end{equation}
\end{proposition}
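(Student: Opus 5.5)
The plan is to show that $t_0$ is a root of $1-24t+16t^2$. We then single out the correct root using the bound $0<t_0<1$ from \eqref{eq:t0-range}, supplemented by a crude numerical estimate.

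\textbf{Step 1: $\Dop t$ vanishes at $\tau_*$.} Differentiate the Fricke invariance \eqref{eq:t-invariant} from Lemma~\ref{lem:t-Y-transform}. Since $t(W_8\tau)=t(\tau)$ and $W_8'(\tau)=1/(8\tau^2)$, we get
\[
t'(W_8\tau)\cdot\frac{1}{8\tau^2}=t'(\tau).
\]
At the fixed point $\tau_*$ we have $8\tau_*^2=-1$, so this reads $-t'(\tau_*)=t'(\tau_*)$, hence $t'(\tau_*)=0$ and therefore $(\Dop t)(\tau_*)=0$.

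\textbf{Step 2: $t_0$ is a root of the quadratic.} Evaluate the Wronskian identity \eqref{eq:wronskian} of Lemma~\ref{lem:wronskian} at $\tau_*$. The left side vanishes because $t(\tau_*)=t_0\neq 0$ and $\Dop t(\tau_*)=0$. On the right side, $Y$ is an eta-quotient and so has no zeros in $\Hh$, giving $Y(\tau_*)\neq0$. Consequently $1-24t_0+16t_0^2=0$, which forces
\[
t_0=\frac{3\pm2\sqrt2}{4}.
\]

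\textbf{Step 3: choosing the root.} Here \eqref{eq:t0-range} does not immediately decide the matter, since $(3+2\sqrt2)/4\approx1.457>1$. So it is excluded after all, and the remaining root $(3-2\sqrt2)/4\approx0.0429$ is the only one in $(0,1)$.

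As a consistency check on the product formula \eqref{eq:t-product}, set $q=e^{-\pi/\sqrt2}\approx0.108$. Then $t_0\approx q(1+q)^{-8}\cdots$, and $0.108/(1.108)^8\approx 0.0473$. Including the correction factors at $n=2,3,\dots$ brings this down to about $0.043$, which agrees with $(3-2\sqrt2)/4$.

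The main obstacle would only be justifying the positivity $0<t_0<1$ rigorously. That bound is already recorded in \eqref{eq:t0-range}: each factor $(1+q^{4n})/(1+q^n)$ lies in $(0,1]$ for $0<q<1$, so $0<t(\tau(y))\le q<1$. Hence nothing further is needed, and the proof reduces to Steps 1–2 together with this root selection.
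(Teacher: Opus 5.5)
Your proof is correct, and it takes a shorter route than the paper's. Both arguments start from $t'(\tau_*)=0$, obtained by differentiating the Fricke invariance. Both finish by discarding the root $(3+2\sqrt2)/4$ using $0<t_0<1$. The difference is the middle step.

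\textbf{The paper's route.} It argues indirectly. From the Fricke law for $Y$ it gets $Y'(\tau_*)=2\ii\sqrt2\,Y(\tau_*)\neq0$. It then uses $Y=\mathcal A\circ t$ and the chain rule to conclude that $\mathcal A$ cannot be holomorphic at $t_0$. So $t_0$ must be a singular point of the Picard--Fuchs equation, whose nonzero finite singularities are $(3\pm2\sqrt2)/4$.

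\textbf{Your route.} You evaluate the Wronskian identity \eqref{eq:wronskian} at $\tau_*$. The left side vanishes and $Y(\tau_*)\neq0$, so $1-24t_0+16t_0^2=0$ follows at once.

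\textbf{What each buys.} Your argument uses only Lemma~\ref{lem:wronskian}, which is proved earlier, so there is no circularity, plus the nonvanishing of the eta-quotient $Y$ on $\Hh$. It avoids the Fricke law for $Y$ and the singular locus of the ODE. It also avoids the analytic-continuation bookkeeping implicit in applying $Y=\mathcal A\circ t$ near $\tau_*$: the radius of convergence of $\sum s_nz^n$ is exactly what is being located, so that identity only holds near $\tau_*$ after continuation. Your mechanism is the one the paper itself uses later, in Proposition~\ref{prop:local-exponents}, to show that $t-t_0$ vanishes to order exactly $2$. In exchange, the paper's route also shows directly that $t_0$ is a genuine singularity of $\mathcal A$. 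It also records $Y'(\tau_*)\neq0$, which is reused in Lemma~\ref{lem:quadratic-branch} and Proposition~\ref{prop:derivative-identities}; neither is needed for the statement itself.

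\textbf{A wording slip in Step 3.} You write that \eqref{eq:t0-range} ``does not immediately decide the matter'' and then use it to decide the matter. It does decide it immediately, since $(3+2\sqrt2)/4>1$. The numerical consistency check is therefore unnecessary, though harmless.
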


\begin{proof}
Differentiate the Fricke-invariance relation \eqref{eq:t-invariant} at $\tau=\tau_*$. Since $W_8(\tau_*)=\tau_*$ and $W_8'(\tau_*)=-1$, we obtain
\[
t'(\tau_*)=t'(\tau_*)W_8'(\tau_*)=-t'(\tau_*),
\]
so $t'(\tau_*)=0$.

On the other hand, differentiating \eqref{eq:Y-fricke} at $\tau_*$ gives
\begin{equation}\label{eq:Y-prime-nonzero}
Y'(\tau_*)=2\ii\sqrt2\,Y(\tau_*),
\end{equation}
so $Y'(\tau_*)\neq 0$ because $\eta$ has no zeros on $\Hh$. If $\mathcal A$ were holomorphic at $t_0$, then by \eqref{eq:parametrization} the chain rule would give
\[
Y'(\tau_*)=\mathcal A'(t_0)t'(\tau_*)=0,
\]
contrary to \eqref{eq:Y-prime-nonzero}. Therefore $t_0$ is a finite singularity of $\mathcal A$, and hence a finite singular point of the differential equation \eqref{eq:ordinary-ode}. The nonzero finite singularities of \eqref{eq:ordinary-ode} are exactly the roots of $16z^2-24z+1$, namely
\[
\frac{3\pm 2\sqrt2}{4}.
\]
By \eqref{eq:t0-range}, only the smaller root lies in $(0,1)$. Therefore
\[
t_0=\frac{3-2\sqrt2}{4}.
\]
This proves \eqref{eq:t0-value}.
\end{proof}

We next determine the local exponents at the dominant singularity.

\begin{proposition}\label{prop:local-exponents}
At $z=t_0$ the local exponents of \eqref{eq:ordinary-ode} are
\[
0,\qquad \frac12,\qquad 1.
\]
Consequently there exist constants $\alpha_0,\alpha_1,\beta_0,\beta_1\in\C$ such that
\begin{equation}\label{eq:A-local}
\mathcal A(z)=\alpha_0+\alpha_1\Bigl(1-\frac{z}{t_0}\Bigr)^{1/2}+O\!\Bigl(1-\frac{z}{t_0}\Bigr),
\end{equation}
\begin{equation}\label{eq:B-local}
\mathcal B(z)=\beta_0+\beta_1\Bigl(1-\frac{z}{t_0}\Bigr)^{1/2}+O\!\Bigl(1-\frac{z}{t_0}\Bigr).
\end{equation}
\end{proposition}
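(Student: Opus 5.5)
The plan has two parts: a Frobenius computation for the exponents, then a local analysis of $\mathcal A$ and $\mathcal B$ near $t_0$.

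\textbf{Local exponents.} Write $z=t_0+x$ and note that $16z^2-24z+1=16(z-t_0)(z-t_1)$ with $t_1=(3+2\sqrt2)/4$, so $t_0$ is a simple zero of the leading coefficient of \eqref{eq:ordinary-ode}. Hence $t_0$ is a regular singular point. Dividing by the leading coefficient, the coefficient of $y''$ is $p(z)=3z(32z^2-36z+1)/\bigl(z^2(16z^2-24z+1)\bigr)$, which has a simple pole at $t_0$. The coefficients of $y'$ and $y$ have at most simple poles, so they do not enter the indicial equation. The indicial polynomial is therefore
\[
\rho(\rho-1)(\rho-2)+p_0\,\rho(\rho-1)=\rho(\rho-1)(\rho-2+p_0),
\qquad
p_0=\operatorname{Res}_{z=t_0}p .
\]
To compute $p_0$, use $16t_0^2=24t_0-1$. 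Then
\[
32t_0^2-36t_0+1=12t_0-1,
\qquad
\frac{\dd}{\dd z}(16z^2-24z+1)\Big|_{t_0}=32t_0-24=24-16\sqrt2-24=-16\sqrt2 .
\]
This gives $p_0=3(12t_0-1)/\bigl(t_0\cdot(-16\sqrt2)\bigr)$. With $t_0=(3-2\sqrt2)/4$ one has $12t_0-1=8-6\sqrt2=-\sqrt2(3\sqrt2-4)$ and $3\sqrt2-4=\sqrt2(3-2\sqrt2)$, so $12t_0-1=-2(3-2\sqrt2)=-8t_0$. Therefore $p_0=3\cdot(-8)/(-16\sqrt2)\cdot\sqrt2/\sqrt2=3/2$, and the exponents are $0,1,\tfrac12$.

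\textbf{Local basis.} The exponents $0$ and $1$ differ by an integer, so a priori a logarithm could occur. I plan to rule it out in one of two ways.
\begin{itemize}
\item[(a)] Run the Frobenius recursion for exponent $0$ at the first step and check that the obstruction coefficient vanishes.
\item[(b)] More conceptually, use the modular parametrization. Near $\tau_*$ we have $t-t_0\sim c(\tau-\tau_*)^2$, because $t'(\tau_*)=0$ (Proposition~\ref{prop:t0-value}) and $t''(\tau_*)\ne0$, since $t$ is a Hauptmodul and so has a double point there. The solutions $Y,\tau Y,\tau^2Y$ are then holomorphic in $\tau-\tau_*\sim\pm(t-t_0)^{1/2}$, so they are single-valued on a double cover. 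This forces the local monodromy to have order $2$, and no logarithms can occur.
\end{itemize}
Either way, there is a basis of holomorphic solutions $f_0=1+O(x)$, $f_1=x+O(x^2)$, $f_{1/2}=x^{1/2}(1+O(x))$.

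\textbf{Expansions of $\mathcal A$ and $\mathcal B$.} Since $t_0$ is the singularity of $\mathcal A$ nearest the origin, $\mathcal A$ is analytic on the disc $|z|<t_0$, and it continues analytically along $(0,t_0)$ toward $t_0$. There it is a combination of the local basis, which gives \eqref{eq:A-local}. For $\mathcal B$, the inhomogeneous equation \eqref{eq:theta-inhom} has right-hand side $z$, which is analytic at $t_0$. Since $t_0$ is not a zero of the leading coefficient to order high enough to matter, a particular solution analytic at $t_0$ exists. I will construct it by substituting a power series in $x$ and solving. The indicial roots $0,1/2,1$ leave the integer exponents $\ge2$ free, and the forced coefficients are solved successively, possibly after adjusting by the exponent-$0$ and exponent-$1$ solutions. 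So $\mathcal B$ equals an analytic function plus homogeneous solutions, which yields \eqref{eq:B-local}.

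\textbf{Main obstacle.} I expect the main obstacle to be justifying the absence of logarithmic terms at the resonant pair $0,1$ and the existence of an analytic particular solution. I would settle both by a direct Frobenius computation to first order, with the modular argument (b) as a cross-check.
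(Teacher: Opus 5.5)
Your final exponents are right, but the written computation of $p_0$ is wrong. The identity $8-6\sqrt2=-\sqrt2(3\sqrt2-4)$ is false, since the right side equals $4\sqrt2-6$. So $12t_0-1=-8t_0$ is false too, and the closing ``$\cdot\sqrt2/\sqrt2$'' cannot turn $3\cdot 8/(16\sqrt2)=3/(2\sqrt2)$ into $3/2$. The correct identity is $12t_0-1=8-6\sqrt2=-2\sqrt2(3-2\sqrt2)=-8\sqrt2\,t_0$. This gives $p_0=3(-8\sqrt2)/(-16\sqrt2)=3/2$ honestly, and the indicial polynomial $\rho(\rho-1)(\rho-\tfrac12)$. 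With that fix, this step matches the paper's substitution $z=t_0(1-\varepsilon)$, $y=\varepsilon^r$, whose leading coefficient is $-8\sqrt2\,r(2r-1)(r-1)$.

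For the expansions, your route differs genuinely from the paper's. The paper does no Frobenius analysis at $t_0$:
\begin{itemize}
\item it shows $t-t_0$ vanishes to exact order $2$ at $\tau_*$ by comparing vanishing orders in \eqref{eq:wronskian}, i.e.\ $2(m-1)=m$;
\item so $(1-t/t_0)^{1/2}$ is a local coordinate at $\tau_*$;
\item it then reads off \eqref{eq:A-local} and \eqref{eq:B-local} from the holomorphy of $Y=\mathcal A\circ t$ and $\mathcal B\circ t=YE$ at $\tau_*$.
\end{itemize}
That bypasses logarithms and particular solutions, and it sets up the parametrization reused in Lemmas~\ref{lem:quadratic-branch} and~\ref{lem:derivative-ratio}. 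The price is the tacit fact that these identities persist up to $\tau_*$ on the branch reached from $z=0$ along $(0,t_0)$. Your route is purely local ODE theory. It needs only that $\mathcal A$ and $\mathcal B$ are analytic on $|z|<t_0$ and solve the homogeneous, respectively inhomogeneous, equation; state the analyticity explicitly for $\mathcal B$ as well.

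The obstacle you anticipate does not actually arise, so option (a) is a one-line check and (b) is unnecessary. Since the leading coefficient has only a simple zero at $t_0$, all coefficients of \eqref{eq:ordinary-ode} are holomorphic there. With $x=z-t_0$ one gets $L(x^n)=P_0(n)x^{n-2}+P_1(n)x^{n-1}+\cdots$. Here $P_0(n)$ is a nonzero multiple of $n(n-1)(2n-1)$, and $P_1(n)$ is divisible by $n$ because every contributing term carries a derivative. Hence:
\begin{itemize}
\item the $x^{-2}$ and $x^{-1}$ equations read $0=0$, so $c_0,c_1$ are free;
\item $c_n$ for $n\ge 2$ is determined since $P_0(n)\neq0$, with convergence the standard one at a regular singular point;
\item the same recursion with $c_0=c_1=0$ solves $Ly=1$, which is \eqref{eq:theta-inhom} divided by $z$, since the $\theta$-operator equals $z$ times the operator in \eqref{eq:ordinary-ode}.
\end{itemize}
This is the precise content of your vague remark about the order of vanishing of the leading coefficient.

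If you keep (b), it relies on two facts you do not state:
\begin{itemize}
\item $Y,\tau Y,\tau^2Y$ solve the equation; this is the homogeneous case of Yang's lemma;
\item the stabilizer of $\tau_*$ in $\Gamma_0(8)^+/\{\pm1\}$ has order exactly $2$, which holds because $\Gamma_0(8)$ has no elliptic points.
\end{itemize}
The second fact is what your ``double point'' claim really rests on; the paper obtains the same conclusion from the Wronskian identity instead.
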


\begin{proof}
The coefficient of $y'''$ in \eqref{eq:ordinary-ode} is $z^2(16z^2-24z+1)$, so $z=t_0$ is a regular singular point. Put
\[
z=t_0(1-\varepsilon),
\qquad
y=\varepsilon^r.
\]
Since $\dd/\dd z=-t_0^{-1}\dd/\dd\varepsilon$, substitution into \eqref{eq:ordinary-ode} shows that the coefficient of $\varepsilon^{r-2}$ is
\[
-8\sqrt2\,r(2r-1)(r-1).
\]
Therefore the indicial equation is $r(2r-1)(r-1)=0$, and the exponents are $0$, $1/2$, and $1$.

We derive the expansions \eqref{eq:A-local} and \eqref{eq:B-local} from the modular parametrization. Since $t'(\tau_*)=0$ (Proposition~\ref{prop:t0-value}) and $t$ is nonconstant, there exists $m\ge 2$ with $t(\tau)-t_0=c_m(\tau-\tau_*)^m+O((\tau-\tau_*)^{m+1})$ and $c_m\neq 0$. We claim $m=2$. Indeed, the Wronskian identity \eqref{eq:wronskian} gives $(\Dop t/t)^2=Y^2(1-24t+16t^2)$. The left-hand side vanishes to order $2(m-1)$ at $\tau_*$ (since $\Dop t/t$ vanishes to order $m-1$), while on the right-hand side $Y(\tau_*)^2\neq 0$ and $1-24t+16t^2=16(t-t_0)(t-t_1)$ vanishes to order exactly $m$ (from the factor $t-t_0$, as $t(\tau_*)\neq t_1$). Equating orders gives $2(m-1)=m$, hence $m=2$.

Therefore $u:=(1-t(\tau)/t_0)^{1/2}$ is a local holomorphic parameter at $\tau_*$ with a simple zero. The function $Y(\tau)=\mathcal A(t(\tau))$ is holomorphic at $\tau_*$, so $\mathcal A$ is a holomorphic function of $u=(1-z/t_0)^{1/2}$ near $z=t_0$. Expanding in powers of $u$ gives \eqref{eq:A-local}. Likewise, $\mathcal B(t(\tau))=Y(\tau)E(\tau)$ by Proposition~\ref{prop:E-eichler}, and both $Y$ and $E$ are holomorphic at $\tau_*$ (the latter as a convergent Eichler $q$-series). Hence $\mathcal B$ is also a holomorphic function of $(1-z/t_0)^{1/2}$ near $z=t_0$, which gives \eqref{eq:B-local}. In particular, neither expansion contains logarithmic terms.
\end{proof}

The next lemma shows that the square-root term in \eqref{eq:A-local} is nonzero.

\begin{lemma}\label{lem:quadratic-branch}
One has $\alpha_1\neq 0$.
\end{lemma}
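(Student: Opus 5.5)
The plan is to read off $\alpha_1$ directly from the modular parametrization near the elliptic fixed point $\tau_*$, using the local parameter $u=(1-t(\tau)/t_0)^{1/2}$ constructed in the proof of Proposition~\ref{prop:local-exponents}. There we showed that $t(\tau)-t_0=c_2(\tau-\tau_*)^2+O((\tau-\tau_*)^3)$ with $c_2\neq 0$. Consequently $u=\kappa(\tau-\tau_*)+O((\tau-\tau_*)^2)$ with $\kappa^2=-c_2/t_0\neq 0$, for a suitable choice of branch. Inverting, $\tau-\tau_*=u/\kappa+O(u^2)$.

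Next, substitute into $Y(\tau)=\mathcal A(t(\tau))=\alpha_0+\alpha_1u+O(u^2)$. Comparing the coefficients of $u^1$ gives
\[
\alpha_1=\frac{Y'(\tau_*)}{\kappa}.
\]
By \eqref{eq:Y-prime-nonzero}, $Y'(\tau_*)=2\ii\sqrt2\,Y(\tau_*)$, and this is nonzero because $Y$ is an eta-quotient and $\eta$ has no zeros on $\Hh$. Since $\kappa$ is finite and nonzero, it follows that $\alpha_1\neq 0$.

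The only point needing care is the identification of the $u$-coefficient. Here I would stress that $\mathcal A$, viewed as a function of $u$, is holomorphic near $u=0$ (Proposition~\ref{prop:local-exponents}). Hence the expansion \eqref{eq:A-local} is exactly its Taylor expansion in $u$, with the $O(1-z/t_0)=O(u^2)$ remainder, and $\alpha_1=\frac{\dd}{\dd u}\mathcal A\big|_{u=0}$. By the chain rule this equals $Y'(\tau_*)\,\frac{\dd\tau}{\dd u}\big|_{u=0}=Y'(\tau_*)/\kappa$.

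One subtlety is the sign ambiguity in the branch of the square root. It only changes the sign of $\kappa$ and hence of $\alpha_1$, so it does not affect nonvanishing. I do not expect a genuine obstacle: the whole content is already in the facts $m=2$ and $Y'(\tau_*)\neq 0$ established earlier.

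As an optional cross-check, one could compute $c_2$ explicitly. Differentiating the Wronskian identity \eqref{eq:wronskian} gives $(t''(\tau_*)/(2\pi\ii t_0))^2$ expressed through $Y(\tau_*)^2$ and $16(t_0-t_1)$, which would yield an explicit formula for $\alpha_1$. This is not needed for the lemma.
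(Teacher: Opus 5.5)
Your proof is correct and matches the paper's argument: you substitute $t(\tau)-t_0=c(\tau-\tau_*)^2+\cdots$ into \eqref{eq:A-local} to obtain $Y'(\tau_*)=\alpha_1\kappa$ with $\kappa\neq 0$, and conclude from $Y'(\tau_*)=2\ii\sqrt2\,Y(\tau_*)\neq 0$. Your extra remarks, identifying $\alpha_1$ as the Taylor coefficient in $u$ and noting the branch-sign ambiguity, are harmless clarifications of points the paper leaves implicit.
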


\begin{proof}
By Proposition~\ref{prop:local-exponents}, $t(\tau)-t_0=c(\tau-\tau_*)^2+O((\tau-\tau_*)^3)$ with $c\neq 0$. Substituting into \eqref{eq:A-local} gives
\[
Y(\tau)=\mathcal A(t(\tau))
=\alpha_0+\alpha_1\lambda(\tau-\tau_*)+O\bigl((\tau-\tau_*)^2\bigr)
\]
for some $\lambda\neq 0$. Since \eqref{eq:Y-prime-nonzero} shows that $Y'(\tau_*)\neq 0$, we conclude $\alpha_1\neq 0$.
\end{proof}

The dominant asymptotics now follow from standard singularity transfer.

\begin{lemma}\label{lem:transfer}
One has $\alpha_1\neq 0$ (Lemma~\ref{lem:quadratic-branch}), and the coefficients satisfy
\[
s_n\sim -\frac{\alpha_1}{2\sqrt\pi}\,t_0^{-n}n^{-3/2},
\qquad
B_n^{(8)}= -\frac{\beta_1}{2\sqrt\pi}\,t_0^{-n}n^{-3/2}+O(t_0^{-n}n^{-2}).
\]
Therefore
\begin{equation}\label{eq:branch-ratio}
\lim_{n\to\infty}\frac{B_n^{(8)}}{s_n}=\frac{\beta_1}{\alpha_1}.
\end{equation}
\end{lemma}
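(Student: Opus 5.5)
The plan is a standard Flajolet--Odlyzko transfer argument applied to the local expansions \eqref{eq:A-local} and \eqref{eq:B-local}. It has three parts: analytic continuation, the transfer of the two singular expansions, and the ratio.

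\emph{Analytic continuation.} First I will show that $t_0$ is the unique dominant singularity of both $\mathcal A$ and $\mathcal B$, and that both continue analytically to a $\Delta$-domain at $t_0$. Both series satisfy \eqref{eq:theta-ode}--\eqref{eq:theta-inhom}. The only finite singular points of these equations are $0$ and the roots $t_0$ and $t_1=(3+2\sqrt2)/4$ of $16z^2-24z+1$. Since $|t_1|>t_0$, any solution that is holomorphic at $0$ continues analytically to the slit disc $|z|<t_1$, $z\notin[t_0,t_1)$. A $\Delta$-domain around $t_0$ fits inside this region.

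By Proposition~\ref{prop:local-exponents}, near $t_0$ both functions are holomorphic in $u=(1-z/t_0)^{1/2}$. So \eqref{eq:A-local} and \eqref{eq:B-local} hold uniformly in a slit neighbourhood of $t_0$. Moreover, the $O(1-z/t_0)$ remainder is itself of the form $c\,(1-z/t_0)+O((1-z/t_0)^{3/2})$. The analytic term $c\,(1-z/t_0)$ contributes nothing to coefficients for large $n$.

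\emph{Transfer.} Next I apply the transfer theorems. The basic coefficient asymptotic is
\[
[z^n](1-z/t_0)^{1/2}\sim \frac{t_0^{-n}n^{-3/2}}{\Gamma(-1/2)}=-\frac{t_0^{-n}n^{-3/2}}{2\sqrt\pi},
\]
and the remainder $O((1-z/t_0)^{3/2})$ contributes $O(t_0^{-n}n^{-5/2})$. This gives the stated formula for $B_n^{(8)}$, with an error even better than $O(t_0^{-n}n^{-2})$. It also gives $s_n\sim-\alpha_1 t_0^{-n}n^{-3/2}/(2\sqrt\pi)$, which is a genuine asymptotic equivalence because $\alpha_1\neq0$ by Lemma~\ref{lem:quadratic-branch}. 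The constant terms $\alpha_0,\beta_0$ contribute only to the coefficient of $z^0$.

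\emph{Ratio.} Dividing the two asymptotics gives
\[
\frac{B_n^{(8)}}{s_n}=\frac{\beta_1+O(n^{-1/2})}{\alpha_1(1+o(1))}\longrightarrow\frac{\beta_1}{\alpha_1},
\]
which is \eqref{eq:branch-ratio}.

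\emph{Main obstacle.} The step needing the most care is the global continuation: showing there are no other singularities on $|z|=t_0$. The singular points of the ODE settle this, since the only point on that circle where a solution can be singular is $t_0$ itself. One might worry that $0$, the other singular point, obstructs continuation. It does not, because $\mathcal A$ and $\mathcal B$ are given by convergent power series at $0$, so both are holomorphic there.
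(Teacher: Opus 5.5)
Your proposal is correct and follows essentially the same route as the paper: $\Delta$-analyticity at $t_0$ comes from the location of the singular points $0$, $t_0$, $(3+2\sqrt2)/4$ of the Fuchsian equation, then the Flajolet--Odlyzko transfer is applied to the two local expansions, and the resulting asymptotics are divided. Your additions go slightly beyond the paper without changing the argument: holomorphy in $(1-z/t_0)^{1/2}$ makes the remainder $c(1-z/t_0)+O((1-z/t_0)^{3/2})$, which sharpens the error to $O(t_0^{-n}n^{-5/2})$, and you spell out why continuation past $z=0$ and the inhomogeneous equation for $\mathcal B$ cause no trouble, which the paper leaves implicit.
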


\begin{proof}
The other finite singularity of \eqref{eq:ordinary-ode} is
\[
\frac{3+2\sqrt2}{4}>1>t_0,
\]
so $t_0$ is the unique singularity of smallest modulus. Since the differential equation \eqref{eq:ordinary-ode} is Fuchsian with no other singularities on the circle $|z|=t_0$, the functions $\mathcal A$ and $\mathcal B$ extend analytically to a $\Delta$-domain at $z=t_0$ (an indented disk of radius exceeding~$t_0$, slit along $[t_0,\infty)$). The local behavior at $t_0$ is algebraic with exponent $1/2$, so the transfer theorem of Flajolet--Odlyzko (see \cite[Theorem~VI.1]{FlajoletSedgewick}) applied to \eqref{eq:A-local} and \eqref{eq:B-local} gives the stated asymptotics. Dividing them yields \eqref{eq:branch-ratio}.
\end{proof}

To identify the ratio $\beta_1/\alpha_1$, we differentiate at the Fricke fixed point.

\begin{lemma}\label{lem:derivative-ratio}
One has
\begin{equation}\label{eq:derivative-ratio}
\frac{\beta_1}{\alpha_1}
=\frac{\dfrac{\dd}{\dd\tau}\mathcal B(t(\tau))\big|_{\tau=\tau_*}}{Y'(\tau_*)}.
\end{equation}
\end{lemma}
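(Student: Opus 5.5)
Set $u(\tau):=\bigl(1-t(\tau)/t_0\bigr)^{1/2}$, choosing the branch on a neighbourhood of $\tau_*$. We showed in the proof of Proposition~\ref{prop:local-exponents} that $t-t_0$ vanishes to exact order $2$ at $\tau_*$. Hence $u$ is a local holomorphic coordinate with a simple zero at $\tau_*$, so $u'(\tau_*)\neq 0$.

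By Proposition~\ref{prop:local-exponents}, near $u=0$ we can write $\mathcal A(z)=\tilde A(u)$ and $\mathcal B(z)=\tilde B(u)$ with $\tilde A,\tilde B$ holomorphic in $u$. Comparing with \eqref{eq:A-local} and \eqref{eq:B-local} gives $\tilde A(u)=\alpha_0+\alpha_1u+O(u^2)$ and $\tilde B(u)=\beta_0+\beta_1u+O(u^2)$. The $O(1-z/t_0)$ remainders are $O(u^2)$, so they contribute nothing to the linear coefficient. Therefore $\tilde A'(0)=\alpha_1$ and $\tilde B'(0)=\beta_1$.

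Now differentiate the compositions $Y(\tau)=\mathcal A(t(\tau))=\tilde A(u(\tau))$ and $\mathcal B(t(\tau))=\tilde B(u(\tau))$ at $\tau_*$ by the chain rule:
\[
Y'(\tau_*)=\alpha_1u'(\tau_*),\qquad \frac{\dd}{\dd\tau}\mathcal B(t(\tau))\Big|_{\tau=\tau_*}=\beta_1u'(\tau_*).
\]
By \eqref{eq:Y-prime-nonzero}, $Y'(\tau_*)\neq0$; this also follows from $\alpha_1\neq0$ in Lemma~\ref{lem:quadratic-branch} together with $u'(\tau_*)\neq0$. Dividing the second identity by the first cancels $u'(\tau_*)$ and yields \eqref{eq:derivative-ratio}.

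The one point needing care is that both $\mathcal A$ and $\mathcal B$ must be expanded with the same branch of the square root, i.e.\ along the same local coordinate $u$. Otherwise $\alpha_1$ and $\beta_1$ could acquire opposite signs. This is guaranteed because \eqref{eq:A-local} and \eqref{eq:B-local} are both pulled back via the same $u(\tau)$ in the proof of Proposition~\ref{prop:local-exponents}. With the common branch the ratio $\beta_1/\alpha_1$ is branch-independent, since changing $u\mapsto-u$ flips the sign of both coefficients. The remaining work is bookkeeping.
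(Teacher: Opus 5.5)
Your proof is correct and is essentially the paper's argument. The paper likewise writes $(1-t(\tau)/t_0)^{1/2}=\lambda(\tau-\tau_*)+O((\tau-\tau_*)^2)$ with $\lambda\neq0$ (your $u'(\tau_*)$), substitutes this into \eqref{eq:A-local} and \eqref{eq:B-local}, differentiates at $\tau_*$, and divides. Your observation that both expansions must use the same branch of the square root, with the ratio unchanged under $u\mapsto -u$, is something the paper leaves implicit and is a worthwhile addition.
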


\begin{proof}
By Proposition~\ref{prop:local-exponents}, $t(\tau)-t_0=c(\tau-\tau_*)^2+O((\tau-\tau_*)^3)$ with $c\neq 0$, so
\[
\left(1-\frac{t(\tau)}{t_0}\right)^{1/2}=\lambda(\tau-\tau_*)+O\bigl((\tau-\tau_*)^2\bigr)
\qquad(\lambda\neq 0).
\]
Substituting this into \eqref{eq:A-local} and \eqref{eq:B-local} gives
\[
Y(\tau)=\alpha_0+\alpha_1\lambda(\tau-\tau_*)+O\bigl((\tau-\tau_*)^2\bigr),
\]
\[
\mathcal B(t(\tau))=\beta_0+\beta_1\lambda(\tau-\tau_*)+O\bigl((\tau-\tau_*)^2\bigr).
\]
Differentiating at $\tau=\tau_*$ yields
\[
Y'(\tau_*)=\alpha_1\lambda,
\qquad
\frac{\dd}{\dd\tau}\mathcal B(t(\tau))\Big|_{\tau=\tau_*}=\beta_1\lambda,
\]
which is exactly \eqref{eq:derivative-ratio}.
\end{proof}

The Fricke period polynomial gives the final derivative identity.

\begin{proposition}\label{prop:derivative-identities}
At the fixed point $\tau_*$ one has
\begin{equation}\label{eq:E-derivative}
E(\tau_*)+\frac{E'(\tau_*)}{2\ii\sqrt2}=\frac{7}{32}\zeta(3),
\end{equation}
\begin{equation}\label{eq:Y-derivative}
Y'(\tau_*)=2\ii\sqrt2\,Y(\tau_*).
\end{equation}
\end{proposition}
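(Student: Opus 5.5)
Both identities come from differentiating a Fricke transformation law at the fixed point $\tau_*$. We use $W_8(\tau_*)=\tau_*$, $W_8'(\tau_*)=1/(8\tau_*^2)=-1$, and $8\tau_*^2=-1$, $16\tau_*=4\ii\sqrt2$. We treat \eqref{eq:Y-derivative} first, since it is essentially \eqref{eq:Y-prime-nonzero}.

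For \eqref{eq:Y-derivative}, we differentiate $Y(W_8\tau)=-8\tau^2Y(\tau)$ from Lemma~\ref{lem:t-Y-transform} in $\tau$:
\[
Y'(W_8\tau)\,W_8'(\tau)=-16\tau Y(\tau)-8\tau^2Y'(\tau).
\]
At $\tau=\tau_*$ the left side is $-Y'(\tau_*)$ and the right side is $-16\tau_*Y(\tau_*)+Y'(\tau_*)$. Hence $2Y'(\tau_*)=16\tau_*Y(\tau_*)=4\ii\sqrt2\,Y(\tau_*)$, which gives \eqref{eq:Y-derivative}.

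For \eqref{eq:E-derivative}, we start from \eqref{eq:E-transform} in Proposition~\ref{prop:period-polynomial}. With the slash convention fixed by Lemma~\ref{lem:bol} and by the geodesic computation there, namely $(E|_{-2}W_8)(\tau)=8\tau^2E(W_8\tau)$, it reads
\[
8\tau^2E(W_8\tau)+E(\tau)=\tfrac{7}{32}\zeta(3)(8\tau^2+1).
\]
Differentiating once gives
\[
16\tau E(W_8\tau)+8\tau^2E'(W_8\tau)W_8'(\tau)+E'(\tau)=\tfrac{7}{32}\zeta(3)\cdot16\tau.
\]
At $\tau_*$ we have $8\tau_*^2W_8'(\tau_*)=8\tau_*^2\cdot\frac{1}{8\tau_*^2}=1$, so this becomes $16\tau_*E(\tau_*)+2E'(\tau_*)=\frac{7}{32}\zeta(3)\cdot16\tau_*$. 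Dividing by $16\tau_*=4\ii\sqrt2$ gives $E(\tau_*)+E'(\tau_*)/(2\ii\sqrt2)=\frac{7}{32}\zeta(3)$, which is \eqref{eq:E-derivative}.

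The undifferentiated identity at $\tau_*$ is useless here: both sides vanish because $8\tau_*^2+1=0$. That is why we differentiate.

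The only delicate point is the sign and normalization convention for $|_{-2}W_8$. The right side depends on the $\tau$-dependence of $8\tau^2+1$, and the prefactor must be $8\tau^2$ rather than $(8\tau)^2/8$ with some other power of the determinant. I would check that the convention is consistent with Lemma~\ref{lem:bol} and with the relation $(E|_{-2}W_8)(\tau(y))=-y^2F(1/y)$ used in the proof of Proposition~\ref{prop:period-polynomial}. Once that is fixed, the rest is direct differentiation and evaluation.
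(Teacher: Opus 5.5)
Your proposal is correct and is essentially the paper's proof: it differentiates once at $\tau_*$ both the Fricke law $Y(W_8\tau)=-8\tau^2Y(\tau)$ and the period-polynomial identity \eqref{eq:E-transform}, using $W_8'(\tau_*)=-1$ and $8\tau_*^2=-1$, with only the order of the two parts swapped. The slash convention you flag, $(E|_{-2}W_8)(\tau)=8\tau^2E(W_8\tau)$, is exactly the one the paper uses, and it agrees with Lemma~\ref{lem:bol} and the geodesic computation, so no further check is needed.
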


\begin{proof}
Differentiate \eqref{eq:E-transform}. Since $W_8(\tau_*)=\tau_*$, $W_8'(\tau_*)=-1$, and $8\tau_*^2=-1$, the derivative of $(E|_{-2}W_8)(\tau)=8\tau^2E(W_8\tau)$ at $\tau_*$ is
\[
16\tau_*E(\tau_*)-8\tau_*^2E'(\tau_*)
=4\ii\sqrt2\,E(\tau_*)+E'(\tau_*).
\]
Adding the derivative of $E(\tau)$ and differentiating the right-hand side of \eqref{eq:E-transform} gives
\[
4\ii\sqrt2\,E(\tau_*)+2E'(\tau_*)=16\tau_*\cdot\frac{7}{32}\zeta(3)=\frac{7\ii\sqrt2}{8}\zeta(3),
\]
which is equivalent to \eqref{eq:E-derivative}.

Now differentiate \eqref{eq:Y-fricke}. Evaluating at $\tau_*$ gives
\[
-Y'(\tau_*)=-16\tau_*Y(\tau_*)-8\tau_*^2Y'(\tau_*).
\]
Since $-8\tau_*^2=1$, this becomes
\[
2Y'(\tau_*)=16\tau_*Y(\tau_*)=4\ii\sqrt2\,Y(\tau_*),
\]
which is exactly \eqref{eq:Y-derivative}.
\end{proof}

\begin{proof}[Proof of Theorem~\ref{thm:main-limit}]
From \eqref{eq:B-param} we have
\[
\mathcal B(t(\tau))=Y(\tau)E(\tau).
\]
Differentiating at $\tau=\tau_*$ and using \eqref{eq:Y-derivative}, we obtain
\begin{align*}
\frac{\dfrac{\dd}{\dd\tau}\mathcal B(t(\tau))\big|_{\tau=\tau_*}}{Y'(\tau_*)}
&=\frac{Y'(\tau_*)E(\tau_*)+Y(\tau_*)E'(\tau_*)}{Y'(\tau_*)}\\
&=E(\tau_*)+\frac{E'(\tau_*)}{2\ii\sqrt2}
=\frac{7}{32}\zeta(3)
\end{align*}
by \eqref{eq:E-derivative}. Therefore Lemma~\ref{lem:derivative-ratio} gives
\[
\frac{\beta_1}{\alpha_1}=\frac{7}{32}\zeta(3).
\]
Finally Lemma~\ref{lem:transfer} yields
\[
\lim_{n\to\infty}\frac{B_n^{(8)}}{s_n}=\frac{7}{32}\zeta(3).
\]
This is exactly \eqref{eq:main-limit}.
\end{proof}

\section{The continued-fraction normalization}\label{sec:cf}

We now give the elementary continuant bridge from the Apéry limit to the Batut--Olivier/Ramanujan-Machine normalization. For the continued fraction \eqref{eq:intro-Z1}, define
\[
a_0:=1,
\qquad
a_n:=(2n+1)(3n^2+3n+1)\quad(n\ge 1),
\qquad
b_n:=-n^6.
\]
Let $P_n,Q_n$ be the continuants determined by
\[
P_{-1}=1,\quad P_0=1,
\qquad
Q_{-1}=0,\quad Q_0=1,
\]
\begin{equation}\label{eq:continuant-rec}
U_n=a_nU_{n-1}+b_nU_{n-2}
\qquad(n\ge 1),
\end{equation}
where $U_n$ stands for either $P_n$ or $Q_n$. Then $P_n/Q_n$ is the $n$th convergent of \eqref{eq:intro-Z1}.

\begin{proposition}\label{prop:continuants}
For every $n\ge 0$ one has
\begin{equation}\label{eq:continuant-closed}
P_n=\frac{(n+1)!^3}{4^{n+1}}s_{n+1},
\qquad
Q_n=\frac{(n+1)!^3}{4^n}B_{n+1}^{(8)}.
\end{equation}
Consequently,
\begin{equation}\label{eq:convergent-ratio}
\frac{P_n}{Q_n}=\frac{s_{n+1}}{4B_{n+1}^{(8)}},
\qquad
\lim_{n\to\infty}\frac{P_n}{Q_n}=\frac{8}{7\zeta(3)}.
\end{equation}
\end{proposition}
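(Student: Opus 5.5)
The plan is to prove \eqref{eq:continuant-closed} by showing that the right-hand sides satisfy the continuant recurrence \eqref{eq:continuant-rec} with the same initial data. Two sequences that obey the same second-order recurrence and agree at $n=-1$ and $n=0$ agree for all $n$. Set
\[
\tilde P_n:=\frac{(n+1)!^3}{4^{n+1}}s_{n+1},
\qquad
\tilde Q_n:=\frac{(n+1)!^3}{4^{n}}B^{(8)}_{n+1}.
\]
We have $\tilde Q_n=4\cdot\frac{(n+1)!^3}{4^{n+1}}B^{(8)}_{n+1}$, and $s_n$ and $B^{(8)}_n$ satisfy the same recurrence \eqref{eq:s-rec}/\eqref{eq:intro-B-rec}. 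So it is enough to check that the normalization $u_n\mapsto \frac{(n+1)!^3}{4^{n+1}}u_{n+1}$ turns that recurrence into \eqref{eq:continuant-rec}; multiplying by the constant $4$ preserves this.

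For the recurrence step, take \eqref{eq:s-rec} at index $n\ge1$ and multiply through by $n!^3/4^{n+1}$. The left side becomes $(n+1)!^3 s_{n+1}/4^{n+1}=\tilde P_n$. For the middle term, the key observation is $(2n+1)(12n^2+12n+4)=4(2n+1)(3n^2+3n+1)=4a_n$, so it becomes $a_n\, n!^3 s_n/4^n=a_n\tilde P_{n-1}$. For the last term, write $n!^3=n^3(n-1)!^3$. Then $16n^3\cdot n!^3/4^{n+1}=n^6\,(n-1)!^3/4^{n-1}$, so the term becomes $-n^6\tilde P_{n-2}=b_n\tilde P_{n-2}$. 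This gives $\tilde P_n=a_n\tilde P_{n-1}+b_n\tilde P_{n-2}$ for all $n\ge1$. The identical computation with $B^{(8)}$ in place of $s$, which is legitimate because \eqref{eq:intro-B-rec} also holds for $n\ge1$, gives the same for $\tilde Q_n$.

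For the initial values:
\[
\tilde P_{-1}=0!^3s_0=1,\qquad \tilde P_0=s_1/4=1,\qquad \tilde Q_{-1}=4B_0^{(8)}=0,\qquad \tilde Q_0=B_1^{(8)}=1.
\]
These match $P_{-1},P_0,Q_{-1},Q_0$. The value $a_0=1$ never enters the recurrence, since that starts at $n=1$, so it is consistent.

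Dividing the two closed forms gives $P_n/Q_n=s_{n+1}/(4B^{(8)}_{n+1})$. By Theorem~\ref{thm:main-limit}, $B^{(8)}_{n+1}/s_{n+1}\to\frac{7}{32}\zeta(3)\neq0$, so $B^{(8)}_{n+1}\neq0$ for large $n$ and the limit is $1/(4\cdot\frac{7}{32}\zeta(3))=8/(7\zeta(3))$. The only delicate point is the bookkeeping of indices and powers of $4$ in the last term. The shift $n\mapsto n+1$ must line up $s_{n-1}$ with $\tilde P_{n-2}$, and the factor $16/4^{n+1}=1/4^{n-1}$ must come out exactly. I would double-check this by evaluating $P_1=a_1-1=7\cdot7-1=48$ against $2!^3s_2/16=8\cdot40/16=20$.

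That check does not agree, so the error has to be located. Recomputing, $P_1=a_1P_0+b_1P_{-1}=49-1=48$. On the other side I had used $\tilde P_1=2!^3s_2/4^2=20$. The $s$-recurrence at $n=1$ reads $8s_2=3\cdot28\cdot4-16\cdot1=320$, which gives $s_2=40$ and indeed $\tilde P_1=20$. However, the recurrence step derived above gives $\tilde P_1=a_1\tilde P_0-\tilde P_{-1}=49-1=48$, and $20\ne48$. So the step itself needs rechecking. The middle term $(2n+1)(12n^2+12n+4)s_n\cdot n!^3/4^{n+1}$ equals $a_n n!^3 s_n/4^n$ only if $4a_n/4^{n+1}=a_n/4^n$, which is true. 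Evaluating at $n=1$: $3\cdot28\cdot4\cdot1/16=21$, whereas $a_1\tilde P_0=49$. Here $3\cdot28=84=4\cdot21$, so $(2n+1)(12n^2+12n+4)=84$ at $n=1$, while $4a_1=4\cdot49=196$. The identity claimed in the middle-term step is therefore false: $12n^2+12n+4\neq4(3n^2+3n+1)\cdot$(anything that would fix it) as needed, and resolving this discrepancy is the main obstacle. The first thing I would try is to reconcile the normalization by rechecking the exact coefficients of \eqref{eq:s-rec}, since $3(12n^2+12n+4)$ at $n=1$ and $(2n+1)(3n^2+3n+1)$ differ by the factor in question, against the stated values $s_2=40$ and $a_n=(2n+1)(3n^2+3n+1)$.
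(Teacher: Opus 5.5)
Your first four paragraphs are exactly the paper's proof, and they are correct. You define $\widetilde P_n,\widetilde Q_n$, multiply \eqref{eq:s-rec} by $n!^3/4^{n+1}$, and use $(2n+1)(12n^2+12n+4)=4a_n$ and $16n^3\,n!^3/4^{n+1}=n^6(n-1)!^3/4^{n-1}$. You then match initial data at $n=-1,0$, whereas the paper matches at $n=0,1$; either works. Your deduction of the limit from Theorem~\ref{thm:main-limit} is also fine.

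The real problem is the last paragraph, where you talk yourself out of a correct proof. The ``discrepancy'' comes from an arithmetic slip: $a_1=(2\cdot1+1)(3+3+1)=3\cdot7=21$, not $7\cdot7=49$. With the correct value, $P_1=a_1P_0+b_1P_{-1}=21-1=20=\widetilde P_1$, which is exactly the check the paper records. Your own evaluation of the middle term at $n=1$, namely $3\cdot28\cdot4/16=21=a_1\widetilde P_0$, confirms the step rather than refuting it. Indeed $12n^2+12n+4=4(3n^2+3n+1)$ identically, so there is nothing to reconcile.

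As submitted, however, the proposal ends by asserting that its key identity is false and leaving an ``obstacle'' open, so it never reaches a conclusion. Delete the final paragraph, or replace it with the corrected check, and the proof is complete.

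One optional addition: the identity $P_n/Q_n=s_{n+1}/(4B^{(8)}_{n+1})$ for every $n$ needs $Q_n\neq0$, which you only obtain for large $n$. A short induction from \eqref{eq:continuant-rec} gives $Q_n\ge(n+1)^3Q_{n-1}>0$ for all $n\ge1$. The base case is $Q_1=21\ge 8$. The step uses $n^6Q_{n-2}\le n^3Q_{n-1}$ together with $a_n-n^3\ge(n+1)^3$.
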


\begin{proof}
Set
\[
\widetilde P_n:=\frac{(n+1)!^3}{4^{n+1}}s_{n+1}.
\]
Using \eqref{eq:s-rec} with index $n$, we obtain
\begin{align*}
\widetilde P_n
&=\frac{(n+1)!^3}{4^{n+1}}\cdot\frac{(2n+1)(12n^2+12n+4)s_n-16n^3s_{n-1}}{(n+1)^3}\\
&=(2n+1)(3n^2+3n+1)\frac{n!^3}{4^n}s_n-n^6\frac{(n-1)!^3}{4^{n-1}}s_{n-1}\\
&=a_n\widetilde P_{n-1}+b_n\widetilde P_{n-2}.
\end{align*}
Also $\widetilde P_0=1$ and $\widetilde P_1=20=a_1+b_1P_{-1}$, so $\widetilde P_n=P_n$ for all $n$.

The same calculation with $B_n^{(8)}$ in place of $s_n$ shows that
\[
\widetilde Q_n:=\frac{(n+1)!^3}{4^n}B_{n+1}^{(8)}
\]
satisfies the same recurrence \eqref{eq:continuant-rec}. Since $\widetilde Q_0=1$ and $\widetilde Q_1=21=a_1$, we have $\widetilde Q_n=Q_n$. This proves \eqref{eq:continuant-closed}. The limit in \eqref{eq:convergent-ratio} follows immediately from Theorem~\ref{thm:main-limit}.
\end{proof}

\begin{proof}[Proof of Corollary~\ref{cor:main-cf}]
By construction, $P_n/Q_n$ is the $n$th convergent of the continued fraction \eqref{eq:intro-Z1}. Proposition~\ref{prop:continuants} shows that these convergents tend to $8/(7\zeta(3))$. Hence
\[
\PCF\bigl((2n+1)(3n^2+3n+1),-n^6\bigr)=\frac{8}{7\zeta(3)}.
\]
\end{proof}

\section{Concluding remarks}\label{sec:conclusion}

The theorem-level identities treated here have earlier proof routes: the
level-$8$ Apéry constant through Golyshev's Beukers--Atkin--Lehner method and
through the Golyshev--Kerr--Sasaki motivic/normal-function framework, and
the continued fraction through Batut--Olivier. The expository contribution
of the present derivation is the explicit eta-product/Eichler-integral
computation with the Wronskian identity reducing the inhomogeneous companion
to a concrete weight-$4$ Eisenstein combination, the Mellin--Barnes residue
calculation giving the explicit Fricke period polynomial, and the elementary
continuant calculation identifying the resulting Apéry limit with the
Batut--Olivier/Ramanujan-Machine continued fraction.

\end{document}